\documentclass{scrarticle}
\usepackage[utf8]{inputenc}
\usepackage{authblk}


\usepackage{graphicx}
\usepackage{amsthm}
\usepackage{amsmath}
\usepackage{amssymb}
\usepackage{float}
\usepackage{mathtools}
\usepackage{dsfont}
\usepackage{tikz}
\usetikzlibrary{decorations.pathreplacing,math}
\usepackage{hyperref}
\usepackage{cleveref}
\usepackage[textwidth=1.1cm,textsize=scriptsize,shadow]{todonotes}
\usepackage[labelfont=bf]{caption}
\usepackage{subcaption}

\let\phi\varphi
\let\epsilon\varepsilon

\let\vec\boldsymbol
\newcommand*	{\NN}	{\mathbb{N}}
\newcommand*	{\RR}	{\mathbb{R}}
\newcommand*	{\DD}	{\mathbb{D}}
\newcommand*	{\EE}	{\mathbb{E}}
\newcommand*    {\CT}   {\mathcal{T}}

\newcommand     {\deltagraph}   {$\delta$-neigh\-bor\-hood graph}
\newcommand     {\completegraph}    {fully connected graph}
\newcommand     {\basecase}     {base case}
\newcommand*    {\edgecon}[1]  {\kappa_{#1}}

\newcommand*    {\compl}[1]{{#1}^{\mathsf{c}}}
\newcommand*\mean[1]{\bar{#1}}
\DeclarePairedDelimiter{\size}{\lvert}{\rvert}
\DeclareMathOperator{\ord}{ord}
\DeclareMathOperator*{\Var}{Var}
\DeclareMathOperator{\Cov}{Cov}

\DeclareMathOperator*{\EX}{\mathbb{E}}

\newtheorem{theorem}{Theorem}
\newtheorem{lemma}[theorem]{Lemma}
\newtheorem{corollary}[theorem]{Corollary}
\newtheorem{notation}[theorem]{Notation}
\newtheorem{remark}[theorem]{Remark}
\newtheorem{example}[theorem]{Example}
\newtheorem{definition}[theorem]{Definition}

\def\todoheader#1{\color{black}\normalfont\sffamily\bgroup\makebox[2em]{\bfseries #1\hfill}\egroup}

\title{Untangling Gaussian Mixtures}
	\author[1]{Eva Fluck}
	\affil[1]{Department of Computer Science, RWTH Aachen University, \textit{lastname@cs.rwth-aachen.de}}
	
	\author[2]{Sandra Kiefer}
	\affil[2]{Department of Computer Science, University of Oxford, \textit{sandra.kiefer@cs.ox.ac.uk}}
	
	\author[1]{Christoph Standke}

\begin{document}
	\maketitle
	
	\begin{abstract}
		Tangles were originally introduced as a concept to formalize regions of high connectivity in graphs. In recent years, they have also been discovered as a link between structural graph theory and data science: when interpreting similarity in data sets as connectivity between points, finding clusters in the data essentially amounts to finding tangles in the underlying graphs.
		
		This paper further explores the potential of tangles in data sets as a means for a formal study of clusters. Real-world data often follow a normal distribution. Accounting for this, we develop a quantitative theory of tangles in data sets drawn from Gaussian mixtures. To this end, we equip the data with a graph structure that models similarity between the points and allows us to apply tangle theory to the data.
		We provide explicit conditions under which tangles associated with the marginal Gaussian distributions exist asymptotically almost surely. This can be considered as a sufficient formal criterion for the separabability of clusters in the data.
	\end{abstract}

\section{Introduction}

Cluster analysis \cite{cohen19, addad2017objective, dasgupta16hcoptimization, hennig2015handbook} can be summarized as the task of grouping data points based on some similarity measurement.
There are two central paradigms for clustering algorithms:
some, like $k$-means and $k$-median, focus on \emph{similarity} and group together data points that are sufficiently close to each other, whereas for example the spectral clustering approach rather uses \emph{dissimilarity} to split the data into sufficiently different groups.
Either way, hard clustering, i.e.\ clustering that partitions the data into disjoint subsets, has its limits when it comes to real-world data. These data sets typically have some inherent fuzziness and clusters that are most adequate to represent them may be wide-spread and might even overlap. All in all, in real-world data, we cannot always expect a clear answer to the question whether two points should be grouped together or considered different, and soft-clustering approaches are necessary. The most common such approach is to assign values to the data points that denote the degree of membership to each cluster. For an overview, see e.g.\ \cite{de2007advances}.

Connectivity is a central concept for studying the structure of complex graphs and more general relational structures \cite{diestel2016,Diestel_Oum_2014,geelen,minorsX}. It finds its way into cluster analysis when we consider ``similar'' data points as adjacent or connected (see e.g.\ \cite{luxburg2007spectral}).  Here, the similarity is usually measured via the distance of the data points in~$\mathbb{R}^d$. When we equip a set of data points with a suitable graph structure that reflects their proximity, analyzing similarity of data points becomes a problem of studying the graph connectivity. We can hence borrow also graph-theoretic concepts to analyse the data set. As it turns out, the notion of a graph \emph{tangle} captures exactly the kind of ambiguity described above: it is a precise formal definition of potentially fuzzy but sufficiently distinct clusters in graphs. 

Tangles in graphs were first introduced in \cite{minorsX}. They offer a perspective on connectivity that differs from the classical view: instead of focusing on the highly connected regions, they deal with thin parts, i.e., the bottlenecks in the graph. A highly connected region is never located exactly at a bottleneck; it must lie (mostly) on one of the sides defined by a cut along the bottleneck.
A tangle is a set of consistent orientations of all the bottlenecks, where the orientation is towards the side of the bottleneck on which the major part of the associated highly connected region lies.
So a tangle provides ``signposts'' towards a particular highly connected region and assigns one such signpost to each low-order cut.
We identify highly connected regions by looking at the signposts. That is, we consider the regions different whenever there is at least one signpost that points into opposite directions in the two tangles. This notion of being sufficiently different is somewhat along the lines of the dissimilarity paradigm, but in a less strict sense than clusters. In fact, the definition via consistent orientations of bottlenecks is robust precisely because it avoids references to an explicit point set.

The authors of \cite{minorsX} show a powerful duality result, which links tangles closely to branch decompositions of graphs. Furthermore, they give a tree decomposition of a graph into its maximal tangles, which was turned into a canonical one in \cite{Carmesin2016}.
The original tangle concept was extended to directed graphs \cite{Johnson2001, Reed1999} and tangle-tree decompositions were also found in that setting \cite{kreuzer2020}.
In related work, variations of the tangle concept that consider different ways to measure connectivity in graphs \cite{adler2007,bozdefokpil21} and in matroids \cite{geelen} have been explored, see also \cite{grohe2016}.
The insight that one can define tangles completely without an underlying graph structure was carried further and led to the study of \emph{abstract separation systems} \cite{dies18,dieserweiss19,diesoum21,elbkneitee21}, where separations play the role of the previously mentioned cuts.
Even in this very general setting, duality theorems which relate tangles to tree decompositions have been derived \cite{diesoum21}.

With Data Science and Machine Learning emerging as central research fields in modern computer science, tangles have caught attention also in the context of clustering.
The first transfer of the concept to real-world data happened in digital-image analysis \cite{diestel2016tangles}, where tangles were used to describe clusters in image data in order to find interesting image parts. As Diestel puts it in \cite{dies20}, ``tangles offer a new paradigm for clustering in large data sets''. Informally speaking, unlike clusters, tangles come equipped with an \emph{order}, a narrowness threshold that defines how small a cut through the data must be for the corresponding region to be considered a bottleneck.
This means that a parameter of the tangle is the ``width of the bottlenecks'' at the signposts that constitute the tangle, and this way, the order is an indicator for the unambiguity of the highly-connected region or the cluster associated with the tangle. Vice versa, branch decompositions serve as witnesses for the absence of tangles above a certain order and may thereby be considered indicators that clusters above a certain quality threshold do not exist within the data. Additionally, the order of the tangles captures hierarchical structure of the data: by decreasing the threshold, some of the previously considered cuts are not classified as small anymore, so the new bottlenecks are a subset of the previous ones and thus, several of the previously distinct dense regions may be united into a single one. It is worth mentioning here that also the tangle-tree decomposition has its correspondence in the realm of clustering, as it yields a canonical decomposition of the data set into clusters, see \cite{diestel2016}.

Tangles can also help when it comes to interpreting clusters. 
In some scenarios, separations of the data have semantics (for example, if one separates discrete data by type) and the orientations of those can then give an explanation of the similarities between clustered data points. For more background on interpretable classification and interpretable clustering, we refer the reader to \cite{BreimanFOS84,PellegM01}.

So far, the connection between tangles and clustering is largely based on intuition, as the numerous figurative expressions used in this introduction indicate.
Towards grasping the link formally, \cite{fluck19} establishes an explicit connection between tangles and hierarchical clustering, which can be generalized to a one-to-one correspondence between a certain type of connectivity functions together with its tangles and hierarchical clustering \cite{fluck22}. Furthermore, the authors of~\cite{elbfioklekneirendteelux20} introduce an algorithmic framework to compute a clustering based on tangles.

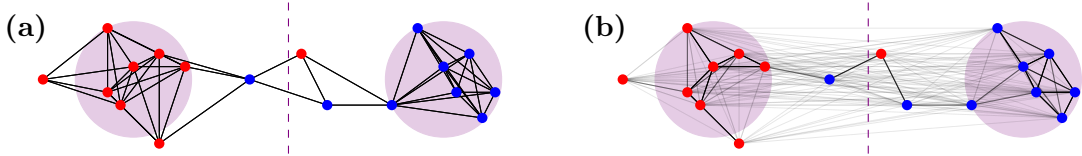
\begin{figure}
        \begin{subfigure}[t]{0.49\textwidth}
                                \begin{tikzpicture}
                                \node[anchor=north east] at (-1.5, 1) {};
                                \node[anchor=north east] at (-1, 1) {\llap{\textbf{(a)}}};
                                \begin{scope}[scale=0.34]
                                \filldraw[violet, opacity=0.2] (0,0) circle (2.25);
                                \filldraw[violet, opacity=0.2] (12,0) circle (2.25);

                                \node (a) at (0,0.5) {};
                                \node (b) at (-1,-0.5) {};
                                \node (c) at (1,-2.5) {};
                                \node (d) at (2,0.5) {};
                                \node (e) at (-0.5,-1) {};
                                \node (f) at (-3.5,0) {};
                                \node (g) at (1,1) {};
                                \node (h) at (-1,2) {};
                                \node (i) at (6.5,1) {};

                                \node (l) at (4.5,0) {};
                                \node (m) at (10,-1) {};
                                \node (n) at (11,2) {};
                                \node (o) at (13,1) {};
                                \node (p) at (14,-0.5) {};
                                \node (q) at (12.5,-0.5) {};
                                \node (r) at (13.5,-1.5) {};
                                \node (s) at (7.5,-1) {};
                                \node (t) at (12,0.5) {};

                                \foreach \a/\b in {0/0.5, -1/-0.5, 1/-2.5, 2/0.5, -0.5/-1, -3.5/0, 1/1, -1/2, 6.5/1, 4.5/0, 10/-1, 11/2, 13/1, 14/-0.5, 12.5/-0.5, 13.5/-1.5, 7.5/-1, 12/0.5}{
                                        \foreach \c/\d in {0/0.5, -1/-0.5, 1/-2.5, 2/0.5, -0.5/-1, -3.5/0, 1/1, -1/2, 6.5/1, 4.5/0, 10/-1, 11/2, 13/1, 14/-0.5, 12.5/-0.5, 13.5/-1.5, 7.5/-1, 12/0.5}{
                                                \tikzmath{
                                                        \x = \a - \c;
                                                        \y = \b - \d;
                                                        \dist = sqrt((\x)^2+(\y)^2);
                                                        if \dist <= 4.5 then {
                                                                {\draw (\a,\b) -- (\c,\d);};
                                                        };
                                                }
                                        }
                                }
                                \foreach \a in {a,b,c,d,e,f,g,h,i}{
                                        \fill[red] (\a) circle (0.2);
                                }
                                \foreach \a in {l,m,n,o,p,q,r,s,t}{
                                        \fill[blue] (\a) circle (0.2);
                                }

                                \draw[color=violet, dashed] (6,3) -- (6,-3);
                                \end{scope}
                                \end{tikzpicture}
                \phantomcaption{~}
                \label{fig:datamodel_intuition2}
        \end{subfigure}~
        \hspace{0.7cm}\begin{subfigure}[t]{0.49\textwidth}
                        \begin{tikzpicture}
                        \node[anchor=north east] at (-1, 1) {\llap{\textbf{(b)}}};
                        \begin{scope}[scale=0.34]
                        \filldraw[violet, opacity=0.2] (0,0) circle (2.25);
                        \filldraw[violet, opacity=0.2] (12,0) circle (2.25);

                        \node (a) at (0,0.5) {};
                        \node (b) at (-1,-0.5) {};
                        \node (c) at (1,-2.5) {};
                        \node (d) at (2,0.5) {};
                        \node (e) at (-0.5,-1) {};
                        \node (f) at (-3.5,0) {};
                        \node (g) at (1,1) {};
                        \node (h) at (-1,2) {};
                        \node (i) at (6.5,1) {};

                        \node (l) at (4.5,0) {};
                        \node (m) at (10,-1) {};
                        \node (n) at (11,2) {};
                        \node (o) at (13,1) {};
                        \node (p) at (14,-0.5) {};
                        \node (q) at (12.5,-0.5) {};
                        \node (r) at (13.5,-1.5) {};
                        \node (s) at (7.5,-1) {};
                        \node (t) at (12,0.5) {};

                        \foreach \a/\b in {0/0.5, -1/-0.5, 1/-2.5, 2/0.5, -0.5/-1, -3.5/0, 1/1, -1/2, 6.5/1, 4.5/0, 10/-1, 11/2, 13/1, 14/-0.5, 12.5/-0.5, 13.5/-1.5, 7.5/-1, 12/0.5}{
                                \foreach \c/\d in {0/0.5, -1/-0.5, 1/-2.5, 2/0.5, -0.5/-1, -3.5/0, 1/1, -1/2, 6.5/1, 4.5/0, 10/-1, 11/2, 13/1, 14/-0.5, 12.5/-0.5, 13.5/-1.5, 7.5/-1, 12/0.5}{
                                        \tikzmath{
                                                \x = \a - \c;
                                                \y = \b - \d;
                                                \dist = (\x)^2+(\y)^2;
                                                \w = exp(-(\dist)/2)*7 + 0.05;
                                                {\draw[opacity=\w] (\a,\b) -- (\c,\d);};
                                        }
                                }
                        }
                        \foreach \a in {a,b,c,d,e,f,g,h,i}{
                                \fill[red] (\a) circle (0.2);
                        }
                        \foreach \a in {l,m,n,o,p,q,r,s,t}{
                                \fill[blue] (\a) circle (0.2);
                        }
                        \draw[color=violet, dashed] (6,3) -- (6,-3);
                        \end{scope}
                        \end{tikzpicture}
                \phantomcaption{~}
                \label{fig:datamodel_intuition3}
        \end{subfigure}
        \caption{Data drawn from two Gaussians with their hidden labels in red and blue. The dashed line represents a possible low-order cut and the purple circles are candidates for highly connected regions. \textbf{(a)} A \deltagraph\ on the data, that is, two data points are adjacent if their distance is at most $\delta$. \textbf{(b)} The \completegraph\ with the edge weights dependent on the distance of the data points represented in the opacity.}
        \label{fig:datamodel_intuition}
\end{figure}
\paragraph*{Our contribution}
This paper embarks on a project to make the connection between tangles and clusters more robust, formal, and quantitative, without depending on particular clustering algorithms or paradigms. 

Real-world data frequently follow a normal distribution, and clustering algorithms are often tested against data drawn from Gaussian mixtures \cite{chauddasvat09,luxburg2007spectral}.
We develop a quantitative theory of tangles in data sets obtained from sampling from Gaussian mixtures, i.e., we assume that the data points are drawn from various Gaussian distributions. Our goal is to identify incomparable tangles, that is, pairs of tangles that orient at least one separation (cut) of the underlying graph differently or rather where none is just a restriction of the other. Our theory allows us to draw conclusions of the kind ``With high probability, there exist incomparable tangles in the data'' as well as concrete lower bounds on these probabilities. 
Each tangle will correspond to a marginal distribution of the Gaussian mixture, i.e., to one Gaussian distribution contributing to the mixture, so we can interpret a high probability for the existence of incomparable tangles as a formalization of the distinguishability or separability of the marginals. 

In a set of data points drawn from a Gaussian mixture, a reasonable clustering algorithm would typically determine clusters that closely correspond to the hidden labels, i.e., it will assign most points correctly to the marginal distribution they have been drawn from \cite{GiraudV18}.
However, as described above, the goal of this work is not to split the data up into partition classes or to actually determine hidden labels.
We rather study ways to quantify the existence of clusters (or tangles) in the data set, which could in a next step be interpreted as a quality measure for the reliability of a prediction of the hidden labels. 
To this end, we take up two standard ways to represent the data as a weighted graph, the \deltagraph\ (\Cref{fig:datamodel_intuition2}) and the \completegraph\ (\Cref{fig:datamodel_intuition3}). In both graphs, similarity of data points is represented in edges and weights on those. We thus use these edge weights as our connectivity measure for candidate bottlenecks to define the tangles.

Consider the following question: given a mixture of Gaussian distributions, what is the probability that incomparable tangles associated with the marginals exist when $n$ data points are drawn?
By carefully employing classical tools from probability theory, our analysis provides explicit conditions such that, asymptotically almost surely, the tangles exist.
In the \deltagraph, we further improve our lower bound on the probability: with (explicit) stronger assumptions, we deduce that the probability for the existence of incomparable tangles tends to $1$ exponentially fast in the number of data points.

Our theorems are applicable to any mixture of Gaussian distributions.
But in order to be so versatile, their preconditions are quite technical and it is not trivial to see what choices of parameters and sets fulfill them.
We analyze the preconditions of our theorems for specific sets of parameters to showcase what a ``good choice'' may be as well as to compute the explicit probability bounds in different settings. 

We perform a detailed analysis for the mixture of two $1$-dimensional Gaussians. Here (see Figure \ref{fig:small_n_1dim_delta_mu} -- \ref{fig:1dim_2distr_expectation_mixing_sigma}), we find that the means of the marginal distributions need not be far apart in order to satisfy the conditions for the applicability of our guarantees. Furthermore, the obtained lower probability bounds for the existence of incomparable tangles are close to $1$ even for small data sets. This excludes large hidden constants and shows that our bounds are not only asymptotically high. For the general situation, as soon as the dimension is larger than $1$ and we have at least three marginal Gaussians, the means of the distributions are not necessarily aligned. Accounting for this, we study examples of Gaussian mixtures whose marginals cannot be separated well by a hyperplane. 

Our work differs from \cite{elbfioklekneirendteelux20}, which also studies tangles in mixtures of two Gaussians, in that we consider all possible separations of the data set instead of restricting them. By studying specific well-behaved separations and imposing restrictions on the data sets, the authors of \cite{elbfioklekneirendteelux20} manage to provide theoretical guarantees that the tangles are in 1-to-1 correspondence with the marginal distributions of the mixture. Moreover, while the results in \cite{elbfioklekneirendteelux20} treat the special case of two Gaussians, our theorems may appear intimidating and therefore cumbersome to handle at first sight, but they apply to the general setting of any mixture of Gaussian distributions with arbitrary parameters.

\paragraph*{Outline} 
Our paper is structured as follows. \Cref{sec:prelim} of this paper provides an introduction to tangles on weighted graphs and some useful tools from probability theory. 
\Cref{sec:datamodel} is dedicated to our model assumptions and the two graph structures we assign to the data sets. In Section \ref{sec:delta}, we present our results for the \deltagraph\ and outline in Section \ref{sec:complete} to which extent they translate to the \completegraph. 
We conclude with a discussion of possible follow-up projects in Section \ref{sec:conclusion}.
\section{Preliminaries}\label{sec:prelim}

For $n\in\NN_{>0}$, we let $[n] \coloneqq \{1,\ldots,n\}$. For a finite set $U$, we write $\binom{U}{n}$ to denote all $n$-element subsets of $U$ and $2^U$ to denote the set of all subsets. For $S \subseteq U$, we let $\compl{S} \coloneqq U \setminus S$. 

\subsection{Graphs and tangles}

This subsection is supposed to serve the reader as a light introduction to the realm of connectivity functions and tangles. For more background on the topic, we refer to \cite{grohe2016}.

A \emph{weighted graph} is a triple $G=(V,E,w)$ where the pair $(V,E)$ is an undirected graph with vertex set $V$ and edge set $E \subseteq \binom{V}{2}$, and $w \colon E \to \RR_{>0}$ is a function that assigns \emph{weights} to the edges.
For $u,v \in V$ with $\{u,v\} \in E$, we let $w(u,v) \coloneqq w(\{u,v\})$. In the remainder of this paper, all graphs are assumed to be weighted. If $E = \binom{V}{2}$, the graph $G$ is a \emph{clique}. 
For $W \subseteq V$, we denote by $G[W]$ the (weighted) subgraph of $G$ induced by the vertices in $W$.
For $S,T\subseteq V$, we let $E(S,T) \coloneqq \{\{u,v\}\in E\mid u\in S,v \in T \}$. Furthermore, we set $E(S) \coloneqq E(S,S)$.

For a finite set $U$, we call $\kappa\colon 2^U\rightarrow \mathbb{R}$ a \emph{connectivity function} if all of the following hold.
\begin{itemize}
    \item $\kappa$ is \emph{symmetric}, i.e., for all $S\subseteq U$, it holds that $\kappa(S)=\kappa(\compl{S})$.
    \item $\kappa$ is \emph{normalized}, i.e., $\kappa(\emptyset)=\kappa(U)=0$.
    \item $\kappa$ is \emph{submodular}, i.e., $\kappa(S)+\kappa(T)\geq\kappa(S\cap T)+\kappa(S\cup T)$ holds for all $S,T\subseteq U$.
\end{itemize}

A prominent example for a connectivity function is the edge connectivity on graphs. Intuitively, it measures how strongly a vertex set is connected to its complement.
On a graph where all edges have weight $1$, the edge connectivity of a vertex set equals the order of a minimal edge separation, i.e., the minimum number of edges that need to be removed to separate the vertex set and its complement.

\begin{example}[see \cite{grohe2016}]
    Let $G=(V,E,w)$ be a weighted graph. For $S \subseteq V$, we define the \emph{edge-connectivity function} $\kappa_G\colon 2^V\rightarrow\mathbb{R}$ via
	\[
	\kappa_G(S) \coloneqq \sum_{\{u,v\}\in E(S,\compl{S})} w(u,v).\]
	\label{ex:edge-con}
\end{example}

Using connectivity functions, we can describe highly connected regions in structures formally. In the following, to do so, we introduce tangles based on connectivity functions. Every tangle corresponds to one highly connected region and vice versa. To have a robust notion of high connectedness, instead of describing the region pointwise, the tangle just provides directions towards the region in an unambiguous way. More precisely, for every set with a low value of the connectivity function (i.e., a ``low connectivity'' to its complement), the tangle contains either the set or its complement, which can be interpreted as determining on which side of the induced separation between the set and its complement the main part of the considered highly connected region lies.

\begin{definition}
    Let $U$ be a finite set and $\kappa$ a connectivity function on $U$. A \emph{$\kappa$-tangle} of order $k\geq0$ is a set $\mathcal{T}\subseteq 2^U$ such that all of the following hold.\footnote{This definition differs slightly from the early works on tangles. We orient all separations within the tangle towards the ``large side'', whereas the original definition in \cite{minorsX} orients separations towards the ``small side''. In the clustering setting, our orientation is more natural. Still, both definitions of the orientation yield equivalent theories.}
	\begin{description}
		\item[\textbf{T.0}] $\kappa(S)<k$ for all $S\in \mathcal{T}$.\label{it:bound:tangle}
		\item[\textbf{T.1}] For all $S\subseteq U$ with $\kappa(S)<k$, either $S\in \mathcal{T}$ or $\compl{S}\in \mathcal{T}$ holds.\label{it:xor:tangle}
		\item[\textbf{T.2}] For all $S_1,S_2,S_3\in \mathcal{T}$, it holds that $S_1\cap S_2\cap S_3\neq\emptyset$.
		\item[\textbf{T.3}] For all $x\in U$, it holds that $\{x\}\notin\mathcal{T}$.
	\end{description}
	If $\kappa$ is clear from the context, we drop it and we call $\mathcal{T}$ a tangle. We denote the order of $\mathcal{T}$ by $\ord(\mathcal{T})$.
	\label{def:tangle}
\end{definition}

Conditions (T.0) and (T.1) ensure that a tangle orients all and only the separations of low order.
The intuition behind requiring non-empty intersection in condition (T.2) is to ensure that the orientations induced by the tangle are consistent, that is, all point towards a unique highly connected region. The number $3$ might seem arbitrary here, but it turns out that it is necessary and sufficient to give a rich theory (see also \cite{Diestel_Oum_2014,grohe2016}). Finally, condition (T.3) means that singletons are not considered to be highly connected regions.

We call two tangles $\mathcal{T}_1$, $\mathcal{T}_2$ \emph{incomparable} if there is an $S\subset U$ with $S\in \CT_1$ and $\compl{S}\in \CT_2$.
That is, the tangles are incomparable if there is some separation that is oriented differently by them.
If $\mathcal{T}_1$ and $\mathcal{T}_2$ are not incomparable, we have $\mathcal{T}_1\subseteq \mathcal{T}_2$ or $\mathcal{T}_1\supseteq \mathcal{T}_2$.

As we are going to see, every clique in a graph induces a tangle. We will use this fact to prove the existence of tangles in sets of data points obtained from Gaussian mixtures.

\begin{notation}
    Let $G=(V,E,w)$ be a weighted graph and suppose $W \subseteq V$. If $E(W) = \emptyset$, set $w_W \coloneqq 1$. Otherwise, let $w_{W} \coloneqq \min\{w(u,v)\mid\{u,v\}\in E(W)\}$. We set
    \begin{equation*}
        \CT_{G}(W) \coloneqq \left\{S\subseteq V \ \bigg\vert \ \kappa_G(S)<\frac{2\cdot \size{W}^2}{9}\cdot w_{W} \text{ and } \size{S\cap W}>\size{W\setminus S} \right\} \ .
    \end{equation*}
\end{notation}

The following lemma provides sufficient conditions for $\CT_{G}(W)$ to be a tangle. Note that the factor $\frac{2}{9}$ is necessary to ensure a non-empty intersection of all triples of sets contained in the tangle.

\begin{lemma}\label{lem:clique-tangle}
    Let $G=(V,E,w)$ be a weighted graph. Let $W \subseteq V$ such that $\size{W} \geq 2$ and $G[W]$ is a clique. For $w_{W} \coloneqq \min\{w(u,v)\mid\{u,v\}\in E(W)\}$, it holds that
      $\CT_{G}(W)$ is a $\kappa_G$-tangle of order $\frac{2}{9} \cdot \size{W}^2 \cdot w_{W}$.
\end{lemma}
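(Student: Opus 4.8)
The plan is to verify the four tangle axioms (T.0)--(T.3) for $\mathcal{T} \coloneqq \CT_{G}(W)$ with order $k \coloneqq \frac{2}{9}\size{W}^2 w_W$. The single quantitative ingredient driving everything is a lower bound on $\kappa_G$ coming from the clique structure of $G[W]$: for any $S \subseteq V$, writing $a \coloneqq \size{S \cap W}$ and $b \coloneqq \size{W \setminus S}$ (so $a + b = \size{W}$), each of the $a\,b$ edges of the clique $G[W]$ joining $S \cap W$ to $W \setminus S$ is a crossing edge of the cut $(S, \compl{S})$ and carries weight at least $w_W$; since all remaining summands in $\kappa_G(S)$ are nonnegative, this yields
\[
\kappa_G(S) \;\ge\; a\,b\,w_W .
\]
I would establish this inequality first and then read off the axioms from it.

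Axiom (T.0) is immediate, as membership in $\CT_{G}(W)$ already requires $\kappa_G(S) < k$. For (T.3), a singleton $\{x\}$ meets $W$ in at most one point, so $\size{\{x\}\cap W} \le 1 \le \size{W} - 1 = \size{W\setminus\{x\}}$ by the hypothesis $\size{W}\ge 2$; the majority condition $\size{S\cap W} > \size{W\setminus S}$ therefore fails and $\{x\}\notin\mathcal{T}$. For (T.1), given $\kappa_G(S) < k$ I use the symmetry $\kappa_G(\compl{S}) = \kappa_G(S) < k$ together with $\compl{S}\cap W = W\setminus S$: exactly one of $S,\compl{S}$ lies in $\mathcal{T}$ unless $S$ splits $W$ into equal halves, i.e.\ $a = b = \size{W}/2$. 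But then the clique bound gives $\kappa_G(S) \ge \frac{1}{4}\size{W}^2 w_W > \frac{2}{9}\size{W}^2 w_W = k$, contradicting $\kappa_G(S) < k$; so the balanced case never occurs and (T.1) holds.

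The crux is (T.2), and this is precisely where the constant $\frac{2}{9}$ is calibrated. For $S\in\mathcal{T}$ I combine $\kappa_G(S)\ge a\,b\,w_W$ with the threshold $\kappa_G(S) < \frac{2}{9}\size{W}^2 w_W$ to obtain $a\,b < \frac{2}{9}\size{W}^2$; dividing by $\size{W}^2$ and setting $t \coloneqq b/\size{W}$ turns this into $t(1-t) < \frac{2}{9}$, whose solution set is $t < \frac13$ or $t > \frac23$. The majority condition $a > b$ forces $t < \frac12$, leaving $t < \frac13$, i.e.\ $\size{W\setminus S} < \frac{\size{W}}{3}$ for every $S\in\mathcal{T}$. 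Then for any $S_1,S_2,S_3\in\mathcal{T}$, the identity $W \setminus (S_1\cap S_2\cap S_3) = \bigcup_{i} (W\setminus S_i)$ and a union bound give
\[
\size*{W \setminus (S_1\cap S_2\cap S_3)} \;\le\; \sum_{i=1}^3 \size{W\setminus S_i} \;<\; 3\cdot\frac{\size{W}}{3} \;=\; \size{W},
\]
so $W\cap S_1\cap S_2\cap S_3 \neq \emptyset$ and in particular $S_1\cap S_2\cap S_3\neq\emptyset$.

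I expect the genuinely delicate point to be (T.2): the argument hinges on converting the connectivity threshold into the sharp per-set bound $\size{W\setminus S} < \frac{\size{W}}{3}$, and the quadratic $t(1-t)<\frac{2}{9}$ reveals that $\frac{2}{9}$ is the largest constant for which the three ``minority'' sets $W\setminus S_i$ cannot jointly cover $W$; a larger threshold would only guarantee $\size{W\setminus S_i} < \frac{\size{W}}{2}$, and the triple-intersection step would break. The remaining axioms are comparatively routine, using only symmetry of $\kappa_G$, the hypothesis $\size{W}\ge 2$, and positivity of the edge weights (which also ensures $w_W$ is a minimum over a nonempty edge set, so that dividing by $w_W$ is legitimate).
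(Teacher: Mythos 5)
Your proof is correct and follows essentially the same route as the paper's: the clique bound $\kappa_G(S) \geq \size{S\cap W}\cdot\size{W\setminus S}\cdot w_W$, the balanced-split contradiction for (T.1), and the sharpening of the membership condition to $\size{W\setminus S} < \frac{1}{3}\size{W}$ so that three minority sets cannot cover $W$, which gives (T.2). Your explicit solution of the quadratic $t(1-t)<\frac{2}{9}$ and the spelled-out union bound are just cleaner presentations of the paper's minimization at $\lfloor\frac{2}{3}\size{W}\rfloor$ and its concluding ``(T.2) holds trivially.''
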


\begin{proof}
    Let $k \coloneqq |W|$ and $K \coloneqq G[W]$.
    (T.0) and (T.3) hold trivially by the definition of $\CT_{G}(W)$. (T.1) holds, since for every vertex set $S$ that contains exactly half of all vertices of $W$, we have $\kappa_G(S)\geq\kappa_{K}(S\cap W)\geq(\frac{k}{2})^2 \cdot w_W>\frac{2k^2}{9}\cdot w_W$. To see that (T.2) holds, we show that $\kappa_G(S)<\frac{2k^2}{9} \cdot w_W$ implies that $S$ or $\compl{S}$ contains more than $\frac{2}{3}$ of all vertices in~$W$. We again use that for every $S\subseteq V$, it holds that $\kappa_G(S)>\kappa_{K}(S\cap W)$, thus we only need to compute $\kappa_{K}(S\cap W)$. Assume there is some $S\subset V$ such that $\kappa_{K}(S\cap W)<\frac{2k^2}{9}w_W$ and $\frac{1}{2}\size{W}<\size{S\cap W}\leq\frac{2}{3}\size{W}$. We compute $\kappa_{K}(S\cap W)\geq\size{S\cap W}\cdot \size{W\setminus S} \cdot w_W = \size{S\cap W} \cdot (k-\size{S\cap W}) \cdot w_W$ and see that the right-hand side becomes minimal when $\size{S\cap W}=\lfloor\frac{2}{3}k\rfloor$. For this minimum, we obtain the bound $\kappa_{K}(S\cap W)\geq\frac{2k^2}{9}w_W$. However, this contradicts the assumption $\kappa_{K}(S\cap W)<\frac{2k^2}{9}w_W$. Thus, we can tighten the restriction $\size{S\cap W}>\size{W\setminus S}$ to $\size{S\cap W}>\frac{2}{3}k$. Then (T.2) holds trivially.
\end{proof}

\subsection{Probability theory}

We give a brief introduction to the main concepts and notation we use from probability theory. For more background, we refer the reader to \cite{klen14}. As usual, we assume our data to be a subset of some
real vector space $\RR^d$, which we equip with the Euclidean norm $\lVert \cdot  \rVert$ as well as the corresponding (standard) topology and Borel $\sigma$-algebra, whose elements we call \emph{Borel sets}. For $\vec{x} \in \RR^d$ and $\delta \geq 0$, we define $B_\delta[\vec{x}] \coloneqq \{ \vec{x'}  \mid \vec{x'} \in \RR^d, \lVert \vec{x'}-\vec{x} \rVert \leq \delta\}$. For a Borel set $S\subseteq \RR^d$, we denote its complement $\RR^d\setminus S$ by $\compl{S}$ and its indicator function by $\mathds{1}_S$. We also set $B_{\delta}[S] \coloneqq \bigcup_{\vec{x} \in S} B_{\delta}[\vec{x}]$.\footnote{The set $B_{\delta}[S]$ is not necessarily a Borel set. However, it is Lebesgue-measurable as a continuous image of the Borel set $S \times B_{\delta}(\vec{0})$, which suffices for our purposes.} The probability spaces we consider consist of some $\RR^d$, its Borel $\sigma$-algebra and a probability distribution $\DD$. Instead of $\DD(\{D \in \RR^d \colon D \text{ has property }E\})$, we write $\Pr_{D \sim \DD}(E)$ or simply $\Pr(E)$.

All random variables that we consider are real-valued. For a random variable $X$, we denote its expected value by $\EX(X)$ and its variance by $\Var(X)$. The covariance of variables $X$ and $Y$ is denoted by $\Cov(X,Y)$. Random variables $X_1,\ldots,X_n$ are called \emph{independent} if, for all $A_1, \ldots, A_n \in \mathcal{B}(\RR^d)$, it holds that
$\Pr(\bigwedge_{i=1}^n (X_i \in A_i)) = \prod_{i=1}^n \Pr(X_i \in A_i)$.

We often use the inequality $\Pr(A \wedge B) \geq \Pr(A) + \Pr(B) - 1$, which is equivalent to the well-known \emph{union bound}.

Recall that the \emph{Gaussian distribution} $\mathcal{N}(\mu, \sigma^2)$ with mean $\mu\in\RR $ and standard deviation $\sigma>0$ as parameters is a distribution on $\RR$ with density 
$\phi(x \mid \mu,\sigma^2)=\frac{1}{\sigma\sqrt{2\pi}}\exp\Big({-\frac{(x-\mu)^2}{2\sigma^2}}\Big)$.
Its distribution function is denoted by $\Phi_{\mu, \sigma^2}$. For $\vec{\mu} = (\mu_1, \ldots, \mu_d)\in\RR^d$ and $\sigma > 0$, the $d$-dimensional \emph{spherical Gaussian distribution} $\mathcal{N}^d_{\text{sp}}(\vec\mu, \sigma^2)$ with mean $\vec{\mu}$ and standard deviation $\sigma$ is the distribution of the random vector $(X_1, \ldots,X_d)$ whose coordinates are independent and the $i$-th coordinate is $\mathcal{N}(\mu_i, \sigma^2)$-distributed.

We recall the Bienaymé-Chebyshev inequality.
\begin{theorem}[Bienaymé-Chebyshev's inequality \cite{bien53,che67}]    \label{thm:chebyshev_app}
    Let $X$ be a random variable with finite expected value and variance. Then, for each $t > 0$, it holds that
    \[
    \Pr\big(\size{X - \EX(X)} \geq t\big) \leq \frac{\Var(X)}{t^2}.
    \]
\end{theorem}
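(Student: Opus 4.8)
The plan is to reduce the statement to a one-line application of Markov's inequality to the squared deviation of $X$ from its mean. Concretely, I would introduce the auxiliary random variable $Y \coloneqq (X - \EX(X))^2$, which is non-negative and, by the very definition of variance, satisfies $\EX(Y) = \Var(X)$. The key observation is that the event whose probability we wish to bound can be rewritten in terms of $Y$: since squaring is monotone on the non-negative reals, we have $\size{X - \EX(X)} \geq t$ if and only if $(X - \EX(X))^2 \geq t^2$, i.e.\ the events $\{\size{X-\EX(X)} \geq t\}$ and $\{Y \geq t^2\}$ coincide. Thus it suffices to bound $\Pr(Y \geq t^2)$.

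For that bound I would invoke Markov's inequality for non-negative random variables: for any non-negative $Y$ and any $s > 0$, one has $\Pr(Y \geq s) \leq \EX(Y)/s$. If this is not taken as given, I would prove it in the standard way by pointwise domination of the indicator: since $Y \geq 0$, the inequality $s \cdot \mathds{1}_{\{Y \geq s\}} \leq Y$ holds everywhere, and taking expectations and using monotonicity and linearity gives $s \cdot \Pr(Y \geq s) = \EX\big(s \cdot \mathds{1}_{\{Y \geq s\}}\big) \leq \EX(Y)$, which rearranges to the claim. The finiteness of $\Var(X)$ (hence of $\EX(Y)$) assumed in the hypothesis is exactly what guarantees these expectations are well-defined and the manipulation is legitimate.

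Combining the two steps, with $s = t^2$, yields
\[
\Pr\big(\size{X - \EX(X)} \geq t\big) = \Pr(Y \geq t^2) \leq \frac{\EX(Y)}{t^2} = \frac{\Var(X)}{t^2},
\]
as desired. I do not anticipate a genuine obstacle here, as this is a classical result; the only point that requires a modicum of care is the measurability and finiteness bookkeeping, namely that $Y$ is a legitimate random variable with finite expectation so that Markov's inequality applies, both of which follow immediately from the standing assumption that $X$ has finite expected value and variance.
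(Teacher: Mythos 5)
Your proof is correct and is the standard argument: rewrite the tail event for $\size{X - \EX(X)}$ as a tail event for the non-negative variable $Y = (X-\EX(X))^2$ and apply Markov's inequality, itself obtained by integrating the pointwise bound $s\cdot\mathds{1}_{\{Y\geq s\}}\leq Y$. The paper does not prove this statement at all --- it is quoted as a classical result with citations --- so there is no in-paper proof to compare against; your argument is the canonical one and fills that role without any gap.
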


The next inequalities study the distribution of a sum of independent random variables.
We use the following special case of the well-known Hoeffding's inequality  \cite{hoeff63}.

\begin{theorem}[Hoeffding's Inequality \cite{hoeff63}]
Let $X_1, \ldots, X_n$ be independent random variables such that for each $i \in [n]$, there exist real numbers $a_i$ and $b_i$ with $a_i \leq X_i \leq b_i$. Let $\mean{X} \coloneqq \frac{1}{n}\sum_{i=1}^n X_i$. If $\EE(\mean{X}) < 0$, then
\[
\Pr\left(\sum_{i=1}^n X_i < 0\right) > 1 - \exp\left(-\frac{2n^2\EE(\mean{X})^2}{\sum_{i=1}^n (b_i - a_i)^2}\right).
\]
\label{cor:hoeffding}
\end{theorem}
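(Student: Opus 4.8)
The plan is to derive this as a corollary of the classical two-sided Hoeffding bound via the exponential-moment (Chernoff) method. Writing $\mu \coloneqq \EE(\mean{X})$ and $S_n \coloneqq \sum_{i=1}^n X_i$, so that $\EE(S_n) = n\mu < 0$ by hypothesis, I would first rewrite the target event. Since $\Pr(\sum_{i=1}^n X_i < 0) = 1 - \Pr(S_n \geq 0)$, it suffices to show $\Pr(S_n \geq 0) < \exp(-2n^2\mu^2/\sum_i(b_i - a_i)^2)$. Observing that $\{S_n \geq 0\}$ is exactly the event $\{S_n - \EE(S_n) \geq -n\mu\}$, and that $-n\mu > 0$ because $\mu < 0$, I have reduced the claim to an upper-tail deviation bound for the centered sum, with deviation parameter $t \coloneqq -n\mu = n\size{\mu} > 0$.

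For the deviation bound I would run the standard argument. Fix $s > 0$ and apply the exponential Markov inequality: $\Pr(S_n - \EE(S_n) \geq t) \leq e^{-st}\,\EE(e^{s(S_n - \EE(S_n))})$. Using independence, the expectation factorizes as $\prod_{i=1}^n \EE(e^{s(X_i - \EE(X_i))})$. The crux is Hoeffding's lemma: for a zero-mean random variable $Y$ with $Y \in [a,b]$ almost surely one has $\EE(e^{sY}) \leq \exp(s^2(b-a)^2/8)$. Applying it to each centered variable $X_i - \EE(X_i)$, which lies in an interval of length $b_i - a_i$, yields $\Pr(S_n - \EE(S_n) \geq t) \leq \exp(-st + \tfrac{s^2}{8}\sum_i(b_i - a_i)^2)$. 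I would then optimize the free parameter, choosing $s = 4t/\sum_i(b_i-a_i)^2$, which minimizes the exponent and gives the clean bound $\exp(-2t^2/\sum_i(b_i - a_i)^2)$. Substituting $t^2 = n^2\mu^2 = n^2\EE(\mean{X})^2$ produces exactly the claimed expression, so that $\Pr(\sum_{i=1}^n X_i < 0) \geq 1 - \exp(-2n^2\EE(\mean{X})^2/\sum_i(b_i-a_i)^2)$.

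The main obstacle is Hoeffding's lemma itself, which is where the factor $(b-a)^2/8$ (and ultimately the constant $2$ in the exponent) comes from. To prove it I would consider the cumulant-generating function $\psi(s) \coloneqq \log\EE(e^{sY})$, check $\psi(0) = 0$ and $\psi'(0) = \EE(Y) = 0$, and bound the second derivative: $\psi''(s)$ equals the variance of $Y$ under the exponentially tilted measure with density proportional to $e^{sY}$, and since that tilted law is still supported in $[a,b]$, its variance is at most $(b-a)^2/4$ (the maximal variance of a distribution on an interval of length $b-a$). A second-order Taylor expansion with this Hessian bound then gives $\psi(s) \leq s^2(b-a)^2/8$. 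Finally, I would comment on the strict inequality in the statement: the displayed bound is the standard Hoeffding estimate with ``$\leq$'' replaced by ``$<$''. This is legitimate because the quantity $\sum_i(b_i - a_i)^2$ in the denominator must be positive for the bound to be meaningful, forcing at least one $X_i$ to be non-degenerate; for such a variable Hoeffding's lemma holds strictly, and the strictness propagates through the product and the Chernoff step to the final inequality.
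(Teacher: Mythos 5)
Your proposal is correct, but there is nothing in the paper to compare it against: the paper imports this statement as a special case of Hoeffding's inequality and cites \cite{hoeff63} for it, giving no proof of its own. Your derivation is the standard textbook argument, correctly adapted to the paper's formulation: rewriting $\Pr(\sum_{i=1}^n X_i < 0)$ as one minus an upper-tail probability for the centered sum with deviation parameter $t = -n\EE(\mean{X}) > 0$, running the Chernoff--Markov step, invoking Hoeffding's lemma for each centered $X_i - \EE(X_i)$ (which indeed lies in an interval of length $b_i - a_i$), and optimizing at $s = 4t/\sum_i (b_i-a_i)^2$; the bookkeeping, in particular the constant $2$ and the substitution $t^2 = n^2\,\EE(\mean{X})^2$, is right. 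The one place that needs repair is your justification of the \emph{strict} inequality: positivity of $\sum_i (b_i - a_i)^2$ forces some interval $[a_i,b_i]$ to have positive length, but it does not force that $X_i$ to be non-degenerate (take $X_i$ constant with $a_i < b_i$ chosen loosely). Fortunately the conclusion you need survives in both cases: for $s > 0$ and $b_i > a_i$, the bound $\EE\big(e^{s(X_i - \EE(X_i))}\big) \leq \exp\big(s^2 (b_i - a_i)^2/8\big)$ is strict when $X_i$ is constant (the left-hand side equals $1$), and also when it is not, since the exponentially tilted law can have variance equal to $(b_i-a_i)^2/4$ (the Popoviciu equality case, requiring the symmetric two-point law on $\{a_i,b_i\}$) for at most one value of the tilting parameter, so the integrated Taylor bound on the cumulant-generating function is strict; a single strict factor then makes the product, and hence the final bound, strict. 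With that sentence corrected, the proof is complete.
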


The following theorem is a quantitative version of the central limit theorem.
\begin{theorem}[Berry-Esseen's Inequality (\cite{ber41,ess45})]
    Let $X_1, \ldots, X_n$ be independent random variables, each with finite expected value, finite variance and finite absolute third central moment $\rho_i = \EX(\size{X_i - \EX(X_i)}^3)$. For $x \in \mathbb{R}$, let 
    \[
    F_n(x) \coloneqq \Pr\left(\frac{\sum_{i=1}^n X_i - \sum \EX(X_i)}{\sqrt{\sum_{i=1}^n \Var(X_i)}} \leq x\right).
    \]
    Then there is a constant $C \in \mathbb{R}$ such that
    $
    \smash{\size{F_n(x) - \Phi_{0,1}(x)} \leq \frac{C \cdot \sum_{i=1}^n \rho_i}{\left(\sum_{i=1}^n \Var(X_i)\right)^\frac{3}{2}}} \ .
    $
    \label{thm:berry-esseen}
\end{theorem}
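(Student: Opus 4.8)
The plan is to prove this by the classical Fourier-analytic route, via characteristic functions and Esseen's smoothing inequality; since the statement asserts only the existence of \emph{some} universal constant $C$, I need not track constants carefully. First I would reduce to the normalized setting. Replacing each $X_i$ by $X_i - \EX(X_i)$ changes neither $\rho_i$, nor $\Var(X_i)$, nor the centered sum appearing in $F_n$, so I may assume $\EX(X_i)=0$. Writing $s_n^2 \coloneqq \sum_{i=1}^n \Var(X_i)$ and $Y_i \coloneqq X_i/s_n$, the variable $Z_n \coloneqq \sum_{i=1}^n Y_i$ has mean $0$ and variance $1$, and $F_n$ is exactly its distribution function. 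Let $f(t) \coloneqq \EX(e^{\mathrm{i} t Z_n}) = \prod_{j=1}^n \psi_j(t)$, with $\psi_j(t) \coloneqq \EX(e^{\mathrm{i} t Y_j})$, be its characteristic function, and recall that $e^{-t^2/2}$ is the characteristic function of $\Phi_{0,1}$. Set $\beta_n \coloneqq \sum_{i=1}^n \rho_i$ and $L_n \coloneqq \beta_n/s_n^3$; this $L_n$ is precisely the quantity the target bound is a constant multiple of. By the power-mean (Lyapunov) inequality $\rho_j \geq \sigma_j^3$, whence $(\sigma_j/s_n)^3 \leq \rho_j/s_n^3 \leq L_n$ for each $j$; thus no single summand carries more than an $L_n^{2/3}$-fraction of the total variance, a fact I will use to make the expansion below uniform. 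The whole proof then amounts to controlling $f(t) - e^{-t^2/2}$ and transferring that control back to distribution functions.

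The second ingredient is Esseen's smoothing inequality, which I would invoke as a black box from Fourier analysis: for every $T>0$,
\[
  \sup_{x\in\RR} \size{F_n(x) - \Phi_{0,1}(x)} \;\leq\; \frac{1}{\pi}\int_{-T}^{T} \frac{\size{f(t) - e^{-t^2/2}}}{\size{t}}\, \mathrm{d}t \;+\; \frac{c_0}{T},
\]
where $c_0$ is an absolute constant arising from the bounded density of the standard normal. This reduces the theorem to a pointwise estimate on $f(t)-e^{-t^2/2}$ over a window $\size{t}\leq T$ to be chosen at the end.

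Third, I would establish that pointwise estimate. Taylor-expanding $e^{\mathrm{i} t Y_j}$ to third order and using $\EX(Y_j)=0$, $\EX(Y_j^2)=\Var(X_j)/s_n^2$, and $\EX(\size{Y_j}^3)=\rho_j/s_n^3$, one gets $\psi_j(t) = 1 - \tfrac{\Var(X_j)}{2 s_n^2}\,t^2 + \theta_j \tfrac{\rho_j}{s_n^3}\size{t}^3$ with $\size{\theta_j}$ bounded by an absolute constant. Taking a complex logarithm of each factor (legitimate once $\size{t}\leq c_1/L_n$, so that each $\psi_j(t)$ is uniformly close to $1$ by the variance-ratio bound above), summing over $j$, and using $\sum_j \Var(X_j)/s_n^2 = 1$ together with $\sum_j \rho_j/s_n^3 = L_n$, I obtain $\log f(t) = -\tfrac{t^2}{2} + R(t)$ with $\size{R(t)} \leq c_2 L_n \size{t}^3$. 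Exponentiating and applying the elementary bound $\size{e^a - e^b}\leq \size{a-b}\,e^{\max(\operatorname{Re}a,\,\operatorname{Re}b)}$, while shrinking $c_1$ so that $c_2 L_n\size{t}^3 \leq \tfrac{1}{6}t^2$ on the window, yields the key estimate
\[
  \size{f(t) - e^{-t^2/2}} \;\leq\; c_3\, L_n\, \size{t}^3\, e^{-t^2/3}
  \qquad\text{for } \size{t} \leq \frac{c_1}{L_n}.
\]

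Finally I would assemble the pieces: choose $T \coloneqq c_1/L_n$. On this window the key estimate applies, so after dividing by $\size{t}$ the integral in the smoothing inequality is at most $\tfrac{c_3}{\pi} L_n \int_{-\infty}^{\infty} t^2 e^{-t^2/3}\,\mathrm{d}t = c_4 L_n$, while the boundary term is $c_0/T = (c_0/c_1) L_n$. Summing gives $\sup_x \size{F_n(x)-\Phi_{0,1}(x)} \leq C\,L_n = C\,\beta_n/s_n^3$, which is the claim. I expect the main obstacle to be two-fold. The smoothing inequality is the analytic heart of the argument and is not elementary, so I would cite rather than reprove it. More delicate in the hands-on part is ensuring that the third-order expansion and the logarithm step stay \emph{uniformly} valid across the entire window $\size{t}\leq c_1/L_n$, whose length grows as $L_n\to 0$; this forces the constants $c_1,c_2$ to be chosen compatibly, and is exactly where the variance-ratio bound from the first paragraph is needed. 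The regime where the logarithm step fails, namely $L_n$ bounded below by an absolute constant, is harmless: there the target bound $C L_n$ already exceeds $1 \geq \sup_x\size{F_n-\Phi_{0,1}}$ once $C$ is large, so the inequality holds trivially.
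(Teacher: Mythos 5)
You take the classical Fourier-analytic route (Esseen smoothing plus characteristic-function estimates), which is indeed how this theorem is proved in the literature; the paper itself gives no proof, as it imports the result from \cite{ber41,ess45}, so your proposal must be judged on its own. The normalization, the smoothing lemma, and the final assembly are fine, but the logarithm step has a genuine gap. You justify taking $\log \psi_j(t)$ on the whole window $\lvert t\rvert \le c_1/L_n$ by the variance-ratio bound $(\sigma_j/s_n)^3 \le \rho_j/s_n^3 \le L_n$, but this only gives $\sigma_j/s_n \le L_n^{1/3}$, so on that window $\sigma_j\lvert t\rvert/s_n$ can be as large as $c_1 L_n^{-2/3}$, which is unbounded as $L_n \to 0$: uniform closeness of $\psi_j(t)$ to $1$ does \emph{not} follow. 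The failure is real, not merely a lost constant. Take $X_1$ uniform on $\{\pm\epsilon\sqrt{n}\}$ and $X_2,\ldots,X_{n+1}$ standard normal; then $s_n^2=(1+\epsilon^2)n$ and $L_n \to \epsilon^3(1+\epsilon^2)^{-3/2}$ as $n \to \infty$, while $\psi_1(t)=\cos\bigl(\epsilon t/\sqrt{1+\epsilon^2}\bigr)$ vanishes and then turns negative already at $\lvert t\rvert = \tfrac{\pi}{2}\sqrt{1+\epsilon^2}/\epsilon$, which for small $\epsilon$ lies far inside the window of length $\approx c_1/\epsilon^3$. There $\log\psi_1$ is undefined and $\lvert\psi_1(t)-1\rvert$ reaches $2$, and since $\epsilon$ (hence $L_n$) can be made arbitrarily small, your fallback ``the logarithm only fails when $L_n$ is bounded below by an absolute constant'' does not cover this regime. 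Your argument is correct for i.i.d.\ summands, where $\sigma_j/s_n = n^{-1/2}\le L_n$ because $\rho\ge\sigma^3$; but the theorem as stated, and as the paper actually uses it in \Cref{thm:small_n_delta} (the $Y_i$ there have label-dependent, hence non-identical, distributions), requires the general case.

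The standard repair avoids logarithms altogether. With $b_j(t) \coloneqq e^{-\sigma_j^2t^2/(2s_n^2)}$, compare the products by telescoping,
\[
\Bigl\lvert \textstyle\prod_j \psi_j(t) - e^{-t^2/2} \Bigr\rvert \;\le\; \sum_{k}\,\lvert \psi_k(t)-b_k(t)\rvert \prod_{j\ne k}\max\bigl\{\lvert\psi_j(t)\rvert,\, b_j(t)\bigr\},
\]
bound each difference by third-order Taylor estimates, $\lvert\psi_k(t)-b_k(t)\rvert \le \rho_k\lvert t\rvert^3/(6s_n^3)+\sigma_k^4t^4/(8s_n^4) \le \rho_k(\lvert t\rvert^3/6 + t^4/8)/s_n^3$, and control the remaining factors by the symmetrization estimate: letting $Y_j'$ be an independent copy of $Y_j$, one has $\lvert\psi_j(t)\rvert^2 = \EX\cos\bigl(t(Y_j-Y_j')\bigr) \le 1-\sigma_j^2t^2/s_n^2+\tfrac43\rho_j\lvert t\rvert^3/s_n^3 \le \exp\bigl(-\sigma_j^2t^2/s_n^2+\tfrac43\rho_j\lvert t\rvert^3/s_n^3\bigr)$. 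The crucial point is that this chain uses only $1-x\le e^{-x}$, valid for \emph{all} real $x$, so it needs no closeness-to-one hypothesis and holds on the entire window. Combining the three ingredients gives $\lvert f(t)-e^{-t^2/2}\rvert \le C' L_n(\lvert t\rvert^3+t^4)e^{-ct^2}$ for $\lvert t\rvert \le 1/(4L_n)$ whenever $L_n \le 1/8$ (the complementary regime being trivial, exactly as you argue), after which your smoothing step and final assembly go through verbatim.
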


The currently best value for the constant in the theorem is $C = 0.5591$ \cite{tyu12}.

\section{The data model}
\label{sec:datamodel}
We assume that the data points are drawn from two or more spherical Gaussian distributions (which we call the \emph{marginals} or \emph{marginal distributions}) with given parameters.\footnote{The task of learning these parameters from a set of data is very well studied, see e.g. \cite{LiuM21} or \cite{MoitraV10}.} In most of the considered cases, we will have two distributions with equal standard deviations. Let $d$ be the dimension of the marginal distributions. Then the data points will be represented by $d$-dimensional vectors. To make explicit reference to them, it will be convenient to consider these vectors as labeled with indices $1, \dots, n$. Therefore, we assume our data encoded as a $(d \times n)$-dimensional matrix $D$ whose $i$-th column is the data point $\vec{x}_i$. Accordingly, we also consider $D$ as a transposed tuple of vectors and write $D = (\vec{x}_1, \dots, \vec{x}_n)$. 

In the general situation, we cannot assume to draw the same number of data points from each of the participating Gaussian distributions.
Instead, we assume that, for each marginal, the number of data points drawn from it is given as a parameter.\footnote{The most common alternative model is drawing the data from a Gaussian mixture with given weights (see, e.g., \cite[Chapter~8]{hennig2015handbook}).
This is mathematically easier to handle due to the independence of the data points. Our results thus translate to this setting. However, the data will have a slightly higher variance, which leads to stronger requirements for our theorems to be applicable.} To this end, we introduce a \emph{hidden labeling}, which assigns to each matrix column the distribution that the corresponding data point is drawn from. To consider variable total numbers of data points, we fix the ratios of points drawn from each distribution. 

\begin{definition}
    Let $m \in \NN$ and $r_1, \ldots, r_m \in \mathbb{Q}_{>0}$ and suppose that $r_1 + \ldots + r_m = 1$. A natural number $n$ is called \emph{compatible with $(r_1, \ldots, r_m)$} if all $r_k \cdot n$ are in $\NN$. In that case, we set $n_k \coloneqq r_k \cdot n$. A \emph{hidden labeling for $(r_1, \dots, r_m, n)$} is a function $\ell \colon [n] \to [m]$ such that, for each $k \in [m]$, there are exactly $n_k$ indices $i \in [n]$ that are mapped to $k$.
\end{definition}

If we write $r_k = \frac{p_k}{q_k}$ with coprime natural numbers $p_k$ and $q_k$, then $n$ is compatible with $(r_1, \ldots, r_m)$ if and only if it is divisible by all the $q_k$.  If $r_1, \ldots, r_m$ are clear from the context, we drop them and simply speak about \emph{compatible} $n$ and $\ell$.

\begin{definition}
    Suppose $d, m \in \NN$ and $r_1, \ldots, r_m \in \mathbb{Q}_{>0}$ with $r_1 + \ldots + r_m = 1$ and let $n$ be compatible with $(r_1, \ldots, r_m)$. Let $\ell \colon [n] \rightarrow [m]$ be a hidden labeling for $(r_1, \dots, r_m, n)$.
    Furthermore, let $(\vec{\mu}_1, \sigma_1), \ldots, (\vec{\mu}_m, \sigma_m)$ be such that for each $k \in [m]$, it holds that $\vec{\mu}_k \in \RR^d$ and $\sigma_k > 0$. Let $(X_{i})_{i \in [n]}$ be a collection of independent $d$-dimensional random variables such that $X_{i} \sim \mathcal{N}^d_{\text{sp}}(\vec{\mu}_{\ell(i)},\sigma_{\ell(i)}^2)$. 
    The \emph{data distribution defined by $((r_k, \vec{\mu}_k, \sigma_k)_{k \in [m]}, n, \ell)$} is the joint distribution of the $X_{i}$ and denoted by $\mathbb{D}[(r_k, \vec{\mu}_k, \sigma_k)_{k \in [m]},n,\ell]$.
    \label{def:dataset}
\end{definition}

Note that the induced data distribution is a distribution over the real vector space of $(d \times n)$-dimensional matrices whose colums correspond to $d$-dimensional data points.

\begin{notation}\label{not:density}
    In the setting of \Cref{def:dataset}, we let $f_k$ be the density of the $k$-th Gaussian distribution; that is, $f_k(\vec{x}) = \phi(\vec{x} \mid \vec{\mu}_k, \sigma_k^2)$. The induced measure is denoted by $\nu_k$, so for every Borel set $A\subseteq \RR^d$, we have $\nu_k(A) = \int_{A} f_k(\vec{x}) d\vec{x}$. We set $\mean{f} \coloneqq \sum r_k f_k$ and $\mean{\nu} \coloneqq \sum r_k \nu_k$.
\end{notation}
Note that in the described setting, for all compatible $n$ and $\ell$, it holds that $\frac{1}{n}\sum_{i=1}^n f_{\ell(i)} = \mean{f}$.

To analyze the data points, we define an underlying graph structure, which captures the similarity between the data points. More precisely, we interpret the data points as the vertices and consider two different ways to define (weighted) adjacency, both of which take the Euclidean distance between vertices into account. 

\begin{definition}
    Let $n, d \in \NN_{>0}$ and suppose
    $w \colon \RR^d \times \RR^d \to \RR_{\geq 0}$ is symmetric and $D = (\vec{x}_i)_{i \in [n]}\in \RR^{d \times n}$.
    Then $G(D,w)$ denotes the weighted graph with vertex set $[n]$, edge set $\big\{\{i,j\}\mid w(\vec{x}_i, \vec{x}_j) > 0\big\}$ and weights $w'(i,j) = w(\vec{x}_i, \vec{x}_j)$.
\end{definition}

In the first model, we maintain all distances. We define a weighted graph, which assigns to the edge between a pair of vertices a weight that is monotonous in their distance and upper-bounded by $1$ with equality if and only if the distance between the data points is $0$.

\begin{example}
\label{ex:completegraph}
    For $c > 0$ and $\hat{w}_{c}(\vec{x},\vec{y}) \coloneqq \exp\left(\frac{- \lVert \vec{x} - \vec{y} \rVert^2}{2c^2}\right)$, the graph $G(D, \hat{w}_c)$ is a \emph{\completegraph}.
     In the context of this work, we will only have $c = \sigma$, where $\sigma$ is the standard deviation of the Gaussian distribution at hand (see also \cite{luxburg2007spectral}). Accordingly, we drop $c$ and will simply speak about \emph{the} \completegraph. See also \Cref{fig:datamodel_intuition3}.
    \end{example}

Instead of maintaining all distances between pairs of vertices, we can also define a threshold for adjacency and forget the distances between vertices that are further apart.

\begin{example}
    For $\delta > 0$ and $w_{\delta}(\vec{x},\vec{y}) \coloneqq \begin{cases}
    1, \text{ if } \lVert \vec{x} - \vec{y} \rVert \leq \delta \\
    0, \text{ otherwise}\end{cases},
    $
    the graph $G(D, w_{\delta})$ is a \emph{\deltagraph}. See also \Cref{fig:datamodel_intuition2}.
    \label{ex:deltagraph}
\end{example}

Both of these models are standard in the clustering context \cite{luxburg2007spectral}\footnote{Besides the graph models considered here, the article \cite{luxburg2007spectral} lists a third one. There, a vertex is adjacent to its $k$ nearest neighbors. However, the model is not suitable for our context, because the existence of an edge between two vertices does not only depend on the positions of the corresponding data points.}. Since every graph vertex represents a point in $\RR^d$, we can interpret separations of $\RR^d$ as separations of the vertex set in the \deltagraph\ and the \completegraph. This motivates the following notation and will ultimately allow us to bound  probabilities for events on the graphs by using bounds on probabilities in $\RR^d$ associated with the data distributions we consider.

\begin{notation}\label{not:vertices:in:a}
    For $d,n \in \NN_{> 0}$, $A \subseteq \RR^d$ and a $(d \times n)$-dimensional data matrix $D$, we set $V_A(D) \coloneqq \{i \in [n] \mid \vec{x}_i \in A\}$. If $D$ is clear from the context, we only write $V_A$.
\end{notation}

Note that, if $A \subseteq \RR^d$ is a Borel set and $\DD$ is a probability distribution on $\RR^{d \times n}$, then the size of $V_A$ (denoted by $\size{V_A}$) is a random variable. 

The following lemma gives tool insights about the dependence of $\EE(|V_A|)$ and $\Var(|V_A|)$ on $n$. We use those in the next section to obtain the desired probability bounds.

\begin{lemma}\label{lem:basic_computations_app}
	Assume $d, m \in \NN_{>0}$ and let $(r_1, \vec{\mu}_1, \sigma_1), \ldots, (r_m, \vec{\mu}_m, \sigma_m)$ be as in \Cref{def:dataset}. Let $A \subseteq \RR^d$ be a Borel set and suppose $w \in \{w_\delta, \hat{w}_c\}$.
	Then, for compatible $n$ and $\ell$ and $\mathbb{D} \coloneqq \mathbb{D}[(r_k, \vec{\mu}_k, \sigma_k)_{k \in [m]},n,\ell]$, the following statements hold.
    \begin{enumerate}
        \item \label{enum:exp_size_app} $\EX_{D \sim \DD}(\size{V_A}) = n \cdot \bar{\nu}(A)$.
        \item \label{enum:var_size_app}$\Var_{D \sim \DD}(\size{V_A})= n \cdot \sum_{k=1}^m r_k \nu_k(A) (1 - \nu_k(A))$.
        \item \label{enum:exp_size_sq_app}$\EX_{D \sim \DD}(\size{V_A}^2) = n^2 \cdot \bar{\nu}(A)^2 + n \cdot \sum_{k=1}^m r_k \nu_k(A) (1 - \nu_k(A)).$
        \item \label{enum:var_size_sq_app} $\Var_{D \sim \DD}(\size{V_A}^2) = O(n^3)$.
        \item \label{enum:exp_kappa_app} There is a $c \geq 0$ such that 
            \[\EX_{D \sim \DD}\big(\kappa_{G(D,w)}(V_A)\big) = n^2 \cdot \int_{A} \int_{\compl{A}} w(\vec{x},\vec{y}) \bar{f}(\vec{x}) \bar{f}(\vec{y}) \, d\vec{y} \, d\vec{x} - n \cdot c.
            \]
        \item \label{enum:var_kappa_app} $\Var_{D \sim \DD}(\kappa_{G(D,w)}(V_A)) = O(n^3)$.
        \item \label{enum:size_geq_two_app}$
        \Pr\limits_{D \sim \mathbb{D}}\left(\size{V_A} \geq 2\right) = 1 -  \left(1 +  \sum\limits_{k=1}^m  \frac{n_k \cdot \nu_k(A)}{1 - \nu_k(A)}\right) \cdot \prod\limits_{k=1}^m \big(1 - \nu_k(A)\big)^{n_k}$
    \end{enumerate}
\end{lemma}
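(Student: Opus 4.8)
The plan is to reduce everything to the single observation that $\size{V_A} = \sum_{i=1}^n Z_i$, where $Z_i \coloneqq \mathds{1}_A(X_i)$ are \emph{independent} Bernoulli variables with success probability $\nu_{\ell(i)}(A)$, together with the label-collapsing identity $\sum_{i=1}^n g(\ell(i)) = \sum_{k=1}^m n_k\, g(k) = n\sum_{k=1}^m r_k\, g(k)$ that converts any index-sum into a sum over marginals. With this, statement~(1) is linearity of expectation; statement~(2) follows from independence (variances add) and $\Var(Z_i)=\nu_{\ell(i)}(A)(1-\nu_{\ell(i)}(A))$; and statement~(3) is immediate from $\EX(\size{V_A}^2)=\Var(\size{V_A})+\EX(\size{V_A})^2$. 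For statement~(7) I would pass to complementary events: $\Pr(\size{V_A}=0)=\prod_i(1-\nu_{\ell(i)}(A))=\prod_k(1-\nu_k(A))^{n_k}$ by independence, and $\Pr(\size{V_A}=1)=\sum_i \nu_{\ell(i)}(A)\prod_{j\neq i}(1-\nu_{\ell(j)}(A))$, which factors, using $\nu_k(A)<1$, as $\big(\sum_k \tfrac{n_k\nu_k(A)}{1-\nu_k(A)}\big)\prod_k(1-\nu_k(A))^{n_k}$; subtracting both from $1$ gives the closed form.

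For statement~(5) I would first record the pointwise identity $\kappa_{G(D,w)}(V_A)=\sum_{i,j=1}^n \mathds{1}_A(X_i)\mathds{1}_{\compl A}(X_j)\,w(X_i,X_j)$, valid because every crossing edge $\{i,j\}$ is counted exactly once (with $i$ its endpoint in $V_A$), because $w=0$ contributes nothing where no edge exists, and because the diagonal $i=j$ vanishes automatically. Taking expectations and using independence of $X_i,X_j$ for $i\neq j$ turns each off-diagonal term into $\int_A\int_{\compl A} w\, f_{\ell(i)}(\vec x)f_{\ell(j)}(\vec y)\,d\vec y\,d\vec x$. Splitting $\sum_{i,j}=\sum_{i\neq j}+\sum_{i=j}$ and applying the label-collapsing identity in both indices, the complete double sum yields $n^2\int_A\int_{\compl A} w\,\bar f(\vec x)\bar f(\vec y)\,d\vec y\,d\vec x$ via $\bar f=\sum_k r_k f_k$, while the diagonal contributes exactly $n\cdot c$ with $c\coloneqq\sum_k r_k\int_A\int_{\compl A} w\, f_k(\vec x)f_k(\vec y)\,d\vec y\,d\vec x\geq 0$ since $w\geq 0$; subtracting gives the stated formula.

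The crux, and where I expect to spend the most care, is the pair of variance bounds~(4) and~(6), which I would handle uniformly. Both quantities are quadratic forms in independent inputs with \emph{bounded} summands: $\size{V_A}^2=\sum_{i,j} Z_iZ_j$ with $Z_iZ_j\in[0,1]$, and $\kappa_{G(D,w)}(V_A)=\sum_{i\neq j} Y_{ij}$ with $Y_{ij}\coloneqq \mathds{1}_A(X_i)\mathds{1}_{\compl A}(X_j)w(X_i,X_j)\in[0,1]$, where boundedness of $Y_{ij}$ uses that both $w_\delta$ and $\hat w_c$ take values in $[0,1]$. Expanding each variance as a sum of covariances of summands, the key structural fact is that whenever the two index pairs are \emph{disjoint}, the summands depend on disjoint, hence independent, blocks of the $X_i$, so their covariance vanishes. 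Only quadruples of indices sharing at least one coordinate survive, of which there are $O(n^3)$ (the dominant contribution coming from pairs sharing exactly one index), and each surviving covariance is bounded by a constant because the summands lie in $[0,1]$; hence both variances are $O(n^3)$. The subtlety worth flagging is that a naive fourth-moment bound would give only $O(n^4)$: the improvement to $O(n^3)$ rests precisely on the cancellation of the disjoint-index covariances, so I would make the disjointness-implies-independence step explicit rather than bounding blindly term by term.
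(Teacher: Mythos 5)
Your proposal is correct and follows essentially the same route as the paper's proof for all seven parts: indicator-sum decompositions for (1)--(3) and (7), the diagonal-correction term $c$ for (5), and the covariance expansion with vanishing disjoint-index covariances for (4) and (6). The only (cosmetic) difference is in bounding the surviving covariances in (6): you use that $Y_{i,j}\in[0,1]$ since both weight functions are bounded by $1$, whereas the paper argues that each covariance depends only on the labels and the index-equality pattern, hence takes finitely many values -- both are valid.
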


\begin{proof} 
For better readability, we write $\EX$, $\Var$, $\Cov$, and $\kappa$ instead of $\EX_{D \sim \DD}$, $\Var_{D \sim \DD}$, $\Cov_{D \sim \DD}$, and $\kappa_{G(D,w)}$ throughout this proof.

To prove Parts \ref{enum:exp_size_app} and \ref{enum:var_size_app}, we observe that
    \[\EX(\size{V_A}) = \EX\left(\sum_{i=1}^n \mathds{1}_A(X_i)\right) = \sum_{i=1}^n \EX(\mathds{1}_A(X_i)) = \sum_{i=1}^n \nu_{\ell(i)}(A) = n \cdot \bar{\nu}(A)\]
and, since the $X_i$ are independent,
    \begin{align*}
        \Var(\size{V_A}) &= \Var\left(\sum_{i=1}^n \mathds{1}_A(X_i)\right) = \sum_{i=1}^n \Var(\mathds{1}_A(X_i)) \\
        &= \sum_{i=1}^n \nu_{\ell(i)}(A) (1 - \nu_{\ell(i)}(A)) = \sum_{k=1}^m n_k \nu_k(A) (1 - \nu_k(A)) \\
        &= n \cdot \sum_{k=1}^m r_k \nu_k(A) (1 - \nu_k(A)).
    \end{align*}
Now Part \ref{enum:exp_size_sq_app} immediately follows from the well-known equality
$
\EX(\size{V_A}^2) = \EX(\size{V_A})^2 + \Var(\size{V_A})
$.

For Part \ref{enum:var_size_sq_app}, we expand $\size{V_A}^2 = (\sum_{i=1}^n \mathds{1}_A(X_i))^2 = \sum_{i=1}^n\sum_{j=1}^n \mathds{1}_A(X_i)\cdot\mathds{1}_A(X_j)$ and conclude
\begin{equation}
\Var(\size{V_A}^2) = \sum_{i=1}^n\sum_{j=1}^n \sum_{i'=1}^n\sum_{j'=1}^n \Cov\big(\mathds{1}_A(X_i)\cdot\mathds{1}_A(X_j), \mathds{1}_A(X_{i'})\cdot\mathds{1}_A(X_{j'})\big).
\label{eqn:var_from_sum_of_cov_app}
\end{equation}
If $i,j,i',j'$ are pairwise distinct, then the random variables $X_i, X_j, X_{i'}$ and $X_{j'}$ are independent and hence $\Cov(\mathds{1}_A(X_i)\cdot\mathds{1}_A(X_j), \mathds{1}_A(X_{i'})\cdot\mathds{1}_A(X_{j'})) = 0$. This implies that $n(n-1)(n-2)(n-3)$ of the $n^4$ summands in \Cref{eqn:var_from_sum_of_cov_app} are equal to $0$. Since every other summand is also at most $1$, this shows the claim.

For proving Parts \ref{enum:exp_kappa_app} and \ref{enum:var_kappa_app}, we set $Y_{i,j} \coloneqq w(X_i,X_j) \cdot \mathds{1}_{A}(X_i) \cdot \mathds{1}_{\compl{A}}(X_j)$, which yields $Y_{i,i} = 0$ whenever $i = j$ and otherwise, for $i \neq j$,
\[
\EX(Y_{i,j}) = \int_{A} \int_{\compl{A}} w(\vec{x},\vec{y}) f_{\ell(i)}(\vec{x}) f_{\ell(j)}(\vec{y}) \, d\vec{y} \, d\vec{x}\ .
\]
Since $
\kappa(V_A) = \sum_{i=1}^n \sum_{j=1}^n Y_{i,j},
$
we conclude 
\begin{align*}
    &\int_{A} \int_{\compl{A}} w(\vec{x},\vec{y}) \bar{f}(\vec{x}) \bar{f}(\vec{y}) \, d\vec{y} \, d\vec{x} \\
    = &\int_{A} \int_{\compl{A}} w(\vec{x},\vec{y}) \bigg(\frac{1}{n}\sum_{i=1}^n f_{\ell(i)}(\vec{x})\bigg) \bigg(\frac{1}{n}\sum_{j=1}^n f_{\ell(j)}(\vec{y})\bigg) \, d\vec{y} \, d\vec{x} \\
    = &\frac{1}{n^2} \int_{A} \int_{\compl{A}} w(\vec{x},\vec{y}) \bigg(\sum_{i=1}^n\sum_{j=1}^n f_{\ell(i)}(\vec{x})f_{\ell(j)}(\vec{y})\bigg) \, d\vec{y} \, d\vec{x} \\
    = &\frac{1}{n^2} \sum_{i=1}^n\sum_{j=1}^n \int_{A} \int_{\compl{A}} w(\vec{x},\vec{y}) f_{\ell(i)}(\vec{x})f_{\ell(j)}(\vec{y}) \, d\vec{y} \, d\vec{x} \\
    = &\frac{1}{n^2} \sum_{i=1}^n\sum_{\substack{j=1 \\ j \neq i}}^n \int\limits_{A} \int\limits_{\compl{A}}  w(\vec{x},\vec{y}) f_{\ell(i)}(\vec{x})f_{\ell(j)}(\vec{y}) \, d\vec{y} \, d\vec{x}\\
    &+ \frac{1}{n^2} \mspace{-2mu}\sum_{i=1}^n \int\limits_{A} \int\limits_{\compl{A}}  w(\vec{x},\vec{y}) f_{\ell(i)}(\vec{x})f_{\ell(i)}(\vec{y}) \, d\vec{y} \, d\vec{x}\\
    =& \frac{1}{n^2} \sum_{i=1}^n\sum_{j=1}^n \EX(Y_{i,j}) + \frac{1}{n^2} \sum_{k=1}^m n_k \cdot \int_{A} \int_{\compl{A}} w(\vec{x},\vec{y}) f_k(\vec{x}) f_k(\vec{y}) \, d\vec{y} \, d\vec{x}\\
    =& \frac{1}{n^2}\EX(\kappa(V_A)) + \frac{1}{n} \bigg(\sum_{k=1}^m r_k \int_{A} \int_{\compl{A}} w(\vec{x},\vec{y}) f_k(\vec{x})f_k(\vec{y}) \, d\vec{y} \, d\vec{x} \bigg) \ .
\end{align*}
Setting $c \coloneqq \sum_{k=1}^m r_k \int_{A} \int_{\compl{A}} w(\vec{x},\vec{y}) f_k(\vec{x})f_k(\vec{y}) \, d\vec{y} \, d\vec{x}$, this yields Part \ref{enum:exp_kappa_app}.

Next, we expand
\begin{equation}
\Var(\kappa(V_A)) = \Var(\sum_{i=1}^n\sum_{j=1}^n Y_{i,j}) = \sum_{i=1}^n\sum_{j=1}^n \sum_{i'=1}^n\sum_{j'=1}^n \Cov(Y_{i,j}, Y_{i',j'})
\label{eqn:var_kappa_from_sum_of_cov_app}    
\end{equation}
and observe for pairwise distinct $i,j,i',j'$ that $X_i, X_j, X_{i'}, X_{j'}$ are independent, which in turn yields the independence of $Y_{i,j}$ and $Y_{i',j'}$. Thus, we get that $\Cov(Y_{i,j}, Y_{i',j'}) = 0$. This yields that there are $O(n^3)$ non-zero summands in \Cref{eqn:var_kappa_from_sum_of_cov_app} and it remains to show that these can be bounded independently from $n$ and $\ell$. Indeed, since the value of $\Cov(Y_{i,j}, Y_{i',j'})$ is determined by $\ell(i),\ell(j),\ell(i'),\ell(j')$ and the information which of the indices are equal, we can find a common upper bound on the covariances that only depends on $(r_1, \vec{\mu}_1, \sigma_1), \ldots, (r_m, \vec{\mu}_m, \sigma_m)$, $A$ and $w$.
This yields Part \ref{enum:var_kappa_app}.

Finally, to prove the formula for the probability of $\size{V_A} \geq 2$, we write
\[
    \Pr_{D \sim \DD}\big(\size{V_A} \geq 2\big) = 1 - \Pr_{D \sim \DD}\big(\size{V_A} = 0\big) - \Pr_{D \sim \DD}\big(\size{V_A} = 1\big)
\]
and use the independence of the random variables $X_i$ to obtain
\begin{align*}
    \Pr_{D \sim \DD}\big(\size{V_A} = 0\big) = \Pr_{D \sim \DD}\Big(\bigwedge_{i=1}^n(X_{i}\notin A)\Big) = \prod_{i=1}^n \Pr_{D \sim \DD}\big(X_i \notin A\big) = \prod_{i=1}^n (1 - \nu_{\ell(i)}(A))
\end{align*}
and, since all $\nu_k(A) < 1$,
\begin{align*}
    \Pr_{D \sim \DD}\big(\size{V_A} = 1\big) &= \sum_{i=1}^n\Pr_{D \sim \DD}\Big((X_i \in A) \wedge \bigwedge_{\substack{j = 1 \\ j \neq i}}^n (X_j\notin A)\Big) \\
    &=\sum_{i=1}^n\Pr_{D \sim \DD}(X_i \in A) \cdot \prod_{\substack{j = 1 \\ j \neq i}}^n \Pr_{D \sim \DD}(X_j\notin A) \\
    &= \sum_{i=1}^n\nu_{\ell(i)}(A) \cdot \prod_{\substack{j = 1 \\ j \neq i}}^n (1-\nu_{\ell(j)}(A))\\
    &= \sum_{i=1}^n\frac{\nu_{\ell(i)}(A)}{1 - \nu_{\ell(i)}(A)} \cdot \prod_{j = 1}^n (1-\nu_{\ell(j)}(A)).
\end{align*}
Together, this yields
\begin{align*}
    \Pr_{D \sim \DD}\big(\size{V_A} \geq 2\big) &= 1 - \left(1 + \sum_{i=1}^n\frac{\nu_{\ell(i)}(A)}{1 - \nu_{\ell(i)}(A)}\right) \cdot \prod_{j = 1}^n (1-\nu_{\ell(j)}(A)) \\
    &= 1 -  \left(1 +  \sum_{k=1}^m  \frac{n_k \cdot \nu_k(A)}{1 - \nu_k(A)}\right) \cdot \prod_{k=1}^m (1 - \nu_k(A))^{n_k} \ ,
\end{align*}
which shows Part \ref{enum:size_geq_two_app}.
\end{proof}

\section{Results for the neighborhood graph}
\label{sec:delta}
In this section, we study the tangles that occur in the \deltagraph, as defined in \Cref{ex:deltagraph}. We find that the unweighted structure of the graph makes it possible to derive strong lower bounds on the probability that incomparable tangles corresponding to the different marginal Gaussian distributions exist. 
By Lemma \ref{lem:clique-tangle}, every non-trivial clique $G[W]$ induces a $\edgecon{G}$-tangle. Every hyperball of radius at most $\frac{\delta}{2}$ induces a clique in the \deltagraph.
As our goal is to identify tangles associated with the dense regions around the means, we consider the $\frac{\delta}{2}$-neighborhood $B_{\frac{\delta}{2}}[\vec{\mu}]$ of $\mu$ in the following. 
We bound the probability that the induced cliques are non-trivial and that the tangles corresponding to different means are indeed incomparable. To do so, we find small separations that are oriented differently by the different tangles.

\subsection{Probability bounds}
\label{sec:probability_delta}

To compute the probability bounds for the existence of incomparable tangles, we first give a bound on the probability that, for a given separation, the $\frac{\delta}{2}$-ball around the mean of a distribution induces a tangle with a prescribed orientation of the separation. 

To consider different geometric objects for the separation, we define it via a Borel set $S$. In \Cref{subsec:comp}, we consider particular choices for $S$ that fulfill the requirements to obtain the probability bounds presented in this section. Often the local minima of the underlying joint distribution will induce a good choice for the Borel set $S$, but, to actually compute the probabilities, it can be useful to make different choices.

\begin{theorem}\label{thm:large_n_delta}
    Suppose $\delta > 0$, $d, m \in \NN_{>0}$ and let $(r_1, \vec{\mu}_1, \sigma_1), \ldots, (r_m, \vec{\mu}_m, \sigma_m)$ be as in \Cref{def:dataset}. Choose a $k \in [m]$ and define $B \coloneqq B_{\frac{\delta}{2}}[\vec{\mu_k}]$. Let $S \subseteq \RR^d$ be a Borel set.
    
    If $\mean{\nu}(B \cap S) > \frac{1}{2} \mean{\nu} (B)$ and
        $\frac{2}{9} \mean{\nu}(B)^2 > \int_{S} \int_{\compl{S}} w_{\delta}(\vec{x},\vec{y}) \mean{f}(\vec{x}) \mean{f}(\vec{y}) \, d\vec{y} \, d\vec{x}$,
    then for compatible $n$ and $\ell$ and $\mathbb{D} = \mathbb{D}[(r_k, \vec{\mu}_k, \sigma_k)_{k \in [m]},n,\ell]$, it holds that
    \[
    \Pr_{D \sim \mathbb{D}}\left( \CT_{G(D, w_{\delta})}(V_B) \text{ is a tangle that contains } V_S
    \right) = 1 - O\left(\tfrac{1}{n}\right) \ .
    \]
\end{theorem}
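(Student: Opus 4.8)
The plan is to reduce the statement, via \Cref{lem:clique-tangle}, to three concentration estimates and combine them with the union bound. The starting observation is geometric: any two points in $B = B_{\frac{\delta}{2}}[\vec\mu_k]$ lie at Euclidean distance at most $\delta$ by the triangle inequality, so $w_\delta$ assigns weight $1$ to every pair of vertices in $V_B$. Hence $G[V_B]$ is always a clique and $w_{V_B}=1$ as soon as $\size{V_B}\geq 2$. In that case \Cref{lem:clique-tangle} guarantees that $\CT_{G(D,w_\delta)}(V_B)$ is a tangle of order $\frac{2}{9}\size{V_B}^2$. Unwinding the definition of $\CT_{G(D,w_\delta)}(V_B)$ and using $V_S\cap V_B = V_{S\cap B}$ together with $V_B\setminus V_S = V_{B\cap\compl{S}}$, the event that $\CT_{G(D,w_\delta)}(V_B)$ is a tangle containing $V_S$ contains the intersection of the three events $\size{V_B}\geq 2$, $\kappa(V_S) < \frac{2}{9}\size{V_B}^2$, and $\size{V_{S\cap B}} > \frac{1}{2}\size{V_B}$, where I abbreviate $\kappa \coloneqq \kappa_{G(D,w_\delta)}$. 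It thus suffices to show that each of the three complementary events has probability $O(\frac1n)$.

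The first and third events are handled directly. Since every marginal assigns positive measure to the ball $B$, \Cref{lem:basic_computations_app}(\ref{enum:size_geq_two_app}) gives that $\Pr(\size{V_B}<2)$ equals a polynomial in $n$ times the exponentially decaying factor $\prod_k(1-\nu_k(B))^{n_k}$, hence $o(\frac1n)$. For the orientation condition, set $Z \coloneqq \size{V_{S\cap B}} - \tfrac12\size{V_B}$. By \Cref{lem:basic_computations_app}(\ref{enum:exp_size_app}) we have $\EX(Z) = n\bigl(\mean\nu(B\cap S) - \tfrac12\mean\nu(B)\bigr) = n\eta$ with $\eta>0$ by the first hypothesis, and by \Cref{lem:basic_computations_app}(\ref{enum:var_size_app}) both $\Var(\size{V_{S\cap B}})$ and $\Var(\size{V_B})$ are $O(n)$, so $\Var(Z)=O(n)$ after bounding the covariance by Cauchy--Schwarz. \Cref{thm:chebyshev_app} then gives $\Pr(Z\leq 0)\leq \Var(Z)/(n\eta)^2 = O(\frac1n)$.

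The main obstacle is the order condition $\kappa(V_S) < \frac{2}{9}\size{V_B}^2$. Writing $I$ for the integral $\int_{S}\int_{\compl{S}} w_\delta(\vec x,\vec y)\mean f(\vec x)\mean f(\vec y)\,d\vec y\,d\vec x$, \Cref{lem:basic_computations_app}(\ref{enum:exp_kappa_app}) gives $\EX(\kappa(V_S)) = n^2 I + O(n)$, while \Cref{lem:basic_computations_app}(\ref{enum:exp_size_sq_app}) gives $\frac{2}{9}\EX(\size{V_B}^2) = \frac{2}{9}n^2\mean\nu(B)^2 + O(n)$. The second hypothesis is precisely the statement that the leading coefficients satisfy $\gamma \coloneqq \frac{2}{9}\mean\nu(B)^2 - I > 0$, so $\frac{2}{9}\EX(\size{V_B}^2) - \EX(\kappa(V_S)) = \gamma n^2 + O(n) \geq \frac{\gamma}{2}n^2$ for large $n$. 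I would then split this $\Theta(n^2)$ gap evenly: the bad event $\{\kappa(V_S)\geq \frac{2}{9}\size{V_B}^2\}$ is contained in the union of $\{\kappa(V_S) \geq \EX(\kappa(V_S)) + \frac{\gamma}{4}n^2\}$ and $\{\frac{2}{9}\size{V_B}^2 \leq \frac{2}{9}\EX(\size{V_B}^2) - \frac{\gamma}{4}n^2\}$. The decisive point is that, by \Cref{lem:basic_computations_app}(\ref{enum:var_size_sq_app}) and (\ref{enum:var_kappa_app}), both $\Var(\size{V_B}^2)$ and $\Var(\kappa(V_S))$ are only $O(n^3)$, whereas each of the two deviations above is of order $n^2$; \Cref{thm:chebyshev_app} therefore bounds each probability by $O(n^3)/(n^2)^2 = O(\frac1n)$. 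Summing the three $O(\frac1n)$ bounds via the union bound yields the claim, an equality because probabilities do not exceed $1$. The one subtle point worth flagging is that $\kappa(V_S)$ and $\size{V_B}^2$ are not independent, which is why I concentrate each around its own mean after splitting the gap rather than trying to concentrate their difference directly.
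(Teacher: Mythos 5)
Your proposal is correct and follows essentially the same route as the paper's proof: reduce via \Cref{lem:clique-tangle} to the three conditions ($\size{V_B}\geq 2$, the majority condition, and the order condition), verify that the hypotheses make the latter two hold in expectation with $\Theta(n)$ resp.\ $\Theta(n^2)$ margins, and then apply \Cref{thm:chebyshev_app} with the variance bounds of \Cref{lem:basic_computations_app} plus a union bound. The only (immaterial) differences are bookkeeping: the paper derives $\size{V_B}\geq 2$ from the concentration of $\size{V_{B\cap S}}$ for $n \geq 2/\varepsilon_1$ instead of invoking \Cref{lem:basic_computations_app}(\ref{enum:size_geq_two_app}), and it uses four two-sided deviation events rather than your variable $Z$ and the split of the $\gamma n^2$ gap.
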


\begin{proof}
For better readability, let $G \coloneqq G(D, w_{\delta})$ and $\kappa \coloneqq \kappa_G$.
 Since $B$ is a ball with radius $\frac{\delta}{2}$, every two data points in $B$ have distance at most $\delta$, so $V_B$ induces a clique in $G$ with $w_{V_B} = 1$ where  $w_{V_B} \coloneqq \min\{w(u,v)\mid\{u,v\}\in E(V_B)\}$.
 Note that, by \Cref{lem:clique-tangle}, it holds that $\CT_G(V_B)$ is a tangle as soon as $\size{V_B}\geq 2$. Furthermore, for the tangle to have the desired orientation, i.e., for $V_S$ to be a member of $\CT_{G}(V_B)$, the inequalities $\size{V_B \cap V_S} > \size{V_B \setminus V_S}$ and $\frac{2}{9}\size{V_B}^2 > \kappa(V_S)$ must hold. 

In the following, we define four events which will ultimately enable us to deduce a lower bound on the probability that the conditions
\begin{enumerate}
    \item\label{it:ineq1_app} $\size{V_B}\geq 2$,
    \item\label{it:ineq2_app} $\size{V_B \cap V_S} > \size{V_B \setminus V_S}$ \text{ and}
    \item\label{it:ineq3_app} $\frac{2}{9}\size{V_B}^2 > \kappa(V_S)$
\end{enumerate}
hold simultaneously. Note that the assumptions of the theorem ensure that Inequalities \eqref{it:ineq2_app} and \eqref{it:ineq3_app} hold in expectation. Indeed, 
\begin{align*}
\EX(\size{V_B \cap V_S}) 
 &\overset{\text{L.\ref{lem:basic_computations_app}.\ref{enum:exp_size_app}}}{=}
n \cdot \mean{\nu}(B \cap S) 
> 
n \cdot (\mean{\nu}(B) - (\mean{\nu}(B \cap S)) \\
&= 
n \cdot \mean{\nu}(B \setminus S) 
\overset{\text{L.\ref{lem:basic_computations_app}.\ref{enum:exp_size_app}}}{=}
\EX(\size{V_B \setminus V_S})
\end{align*}

and similarly, by \Cref{lem:basic_computations_app}, Parts \ref{enum:exp_size_sq_app} and  \ref{enum:exp_kappa_app}, Inequality \eqref{it:ineq3_app} holds in expectation. 
Therefore, we can use the Bienaymé-Chebyshev inequality (\Cref{thm:chebyshev_app}) for each of the random variables $\size{V_{B \cap S}}$, $\size{V_{B \setminus S}}$, $\size{V_B}^2$,
and $\kappa(V_S)$ to derive probability bounds. Set 

\begin{multline*}
\varepsilon_1 \coloneqq \frac{1}{2}\cdot\big(\mean{\nu}(B \cap S) - \mean{\nu}(B \setminus S)\big)
\qquad \text{ and }\\
\epsilon_2 \coloneqq \frac{1}{2} \cdot \left(\frac{2}{9} \mean{\nu}(B)^2 - \int_{S} \int_{\compl{S}} w_{\delta}(\vec{x},\vec{y}) \mean{f}(\vec{x}) \mean{f}(\vec{y}) \, d\vec{y} \, d\vec{x}\right).
\end{multline*}

With this, we are ready to define our events $E_1, E_2, E_3, E_4$ and deduce lower bounds on their probabilities.
\begin{align*}
    \Pr_{D \sim \DD}\Big(\underbrace{\big\lvert\size{V_{B \cap S}} - \EX(\size{V_{B \cap S}}) \big\rvert < \varepsilon_1 \cdot n}_{\eqqcolon{} E_1}\Big) 
    &> \mathrlap{1 - \frac{\Var(\size{V_{B \cap S}})}{\varepsilon_1^2 n^2}}\phantom{1 - \frac{\Var(\kappa(V_S))}{\varepsilon_2^2 n^4} }
    \overset{\text{L.\ref{lem:basic_computations_app}.\ref{enum:var_size_app}}}{=}
    1 - O\left(\frac{1}{n}\right),\\
    \Pr_{D \sim \DD}\Big(\underbrace{\big\lvert\size{V_{B \setminus S}} - \EX(\size{V_{B \setminus S}}) \big\rvert < \varepsilon_1 \cdot n}_{\eqqcolon{} E_2}\Big) 
    &> \mathrlap{1 - \frac{\Var(\size{V_{B \setminus S}})}{\varepsilon_1^2 n^2}}\phantom{1 - \frac{\Var(\kappa(V_S))}{\varepsilon_2^2 n^4} }
    \overset{\text{L.\ref{lem:basic_computations_app}.\ref{enum:var_size_app}}}{=}
    1 - O\left(\frac{1}{n}\right),\\
    \Pr_{D \sim \DD}\Big(\underbrace{\big\lvert\size{V_{B}}^2 - \EX(\size{V_{B}}^2) \big\rvert < \varepsilon_2 \cdot n^2}_{\eqqcolon{} E_3}\Big) 
    &> \mathrlap{1 - \frac{\Var(\size{V_{B}}^2)}{\varepsilon_2^2 n^4}}\phantom{1 - \frac{\Var(\kappa(V_S))}{\varepsilon_2^2 n^4} }
    \overset{\text{L.\ref{lem:basic_computations_app}.\ref{enum:var_size_sq_app}}}{=}
    1 - O\left(\frac{1}{n}\right),\\
    \Pr\limits_{D \sim \DD}\Big(\underbrace{\big\lvert\kappa(V_S) - \EX(\kappa(V_S)) \big\rvert < \varepsilon_2 \cdot n^2}_{\eqqcolon{} E_4}\Big) 
    &> 1 - \frac{\Var(\kappa(V_S))}{\varepsilon_2^2 n^4} 
    \overset{\text{L.\ref{lem:basic_computations_app}.\ref{enum:var_kappa_app}}}{=} 
    1 - O\left(\frac{1}{n}\right).
\end{align*}
The union bound then yields 
\[\Pr_{D \sim \DD}(E_1 \wedge E_2 \wedge E_3 \wedge E_4) \geq \Pr_{D \sim \DD}(E_1) +\Pr_{D \sim \DD}(E_2) + \Pr_{D \sim \DD}(E_3) + \Pr_{D \sim \DD}(E_4) - 3 = 1 - O\left(\frac{1}{n}\right).
\]
It suffices now to show that $E_1 \land E_2 \land E_3 \land E_4$ entails the validity of Inequalities \eqref{it:ineq1_app}-\eqref{it:ineq3_app}. Suppose that all of $E_1, E_2, E_3, E_4$ hold. Then for $n \geq \frac{2}{\varepsilon_1}$, we have
\begin{align*}
\size{V_B} &\geq \size{V_{B\cap S}} \\
&= 
(\size{V_{B\cap S}} - \EX(\size{V_{B\cap S}}))
+ 
(\EX(\size{V_{B\cap S}}) - \EX(\size{V_{B\setminus S}}))
+ \EX(\size{V_{B\setminus S}})
\\&\overset{\mathclap{E_1}}{{}>{}}  {-}\varepsilon_1 \cdot n + 2\varepsilon_1 \cdot n + \EX(\size{V_{B\setminus S}})
\\&> \varepsilon_1 \cdot n 
\\&\geq 2 \ ,
\end{align*}
which shows Inequality \eqref{it:ineq1_app}. Moreover,
\begin{align*}
    \size{V_B \cap V_S} - \size{V_B \setminus V_S} 
    ={} &\big(\size{V_B \cap V_S} - \EX(\size{V_B \cap V_S})\big)  \\& {}+ \big(\EX(\size{V_B \cap V_S})- \EX(\size{V_B \setminus V_S})\big)  \\& {}+ \big(\EX(\size{V_B \setminus V_S}) - \size{V_B \setminus V_S}\big) \\
    ={} &\big(\size{V_{B \cap S}} - \EX(\size{V_{B \cap S}})\big) + n \cdot \big(\mean{\nu}(B \cap S) - \mean{\nu}(B \setminus S)\big)
    \\&{}+\big(\EX(\size{V_{B \setminus S}}) - \size{V_{B \setminus S}}\big)\\
    \overset{\mathclap{E_1,E_2}}{{}>{}} &{-}\varepsilon_1 \cdot n + 2 \varepsilon_1 \cdot n -\varepsilon_1 \cdot n\\ 
    ={} &0 \ ,
\end{align*}
which proves Inequality \eqref{it:ineq2_app}. Finally, by Parts \ref{enum:exp_size_sq_app} and \ref{enum:exp_kappa_app} of \Cref{lem:basic_computations_app}, there exists a $c \geq 0$ such that
\begin{align*}
	\frac{2}{9}\size{V_B}^2 \mspace{-2mu} - \mspace{-2mu}\kappa(V_S)
	= &\ \frac{2}{9}\big(\size{V_B}^2 \mspace{-1mu} - \mspace{-1mu} \EX(\size{V_B}^2)\big) \mspace{-2mu} + \mspace{-2mu} \left(\mspace{-1mu} \frac{2}{9}\EX(\size{V_B}^2) \mspace{-1mu} - \mspace{-1mu} \EX(\kappa(V_S))\mspace{-2mu}\right)  \\& {} +  \big(\EX(\kappa(V_S)) \mspace{-1mu} - \mspace{-1mu} \kappa(V_S)\big) \\
	\overset{\mathclap{E_3,E_4}}{{}>{}}& {-}\varepsilon_2 \cdot n^2 + 2\varepsilon_2 \cdot n^2 + \frac{2}{9} n \cdot \sum_{k=1}^m r_k \nu_k(B) (1 - \nu_k(B)) \\& {} + c\cdot n -\varepsilon_2 \cdot n^2 \\
	>&\ 0 \ ,
\end{align*}
which shows Inequality \eqref{it:ineq3_app}.
Together, this proves that for $n \geq \frac{2}{\varepsilon_1}$, the event $E_1 \land E_2 \land E_3 \land E_4$ implies that 
$\CT_{G}(V_B)$ is a tangle that contains $V_S$. This completes the proof.
\end{proof}

\begin{remark}
A small modification of the proof shows that the probability for $V_S$ to be a member of $\CT_{G(D, w_{\delta})}(V_B)$ is in $O(\frac{1}{n})$ if the second inequality holds with opposite sign; that is, if $\frac{2}{9} \mean{\nu}(B)^2 < \int_{S} \int_{\compl{S}} w_{\delta}(\vec{x},\vec{y}) \mean{f}(\vec{x}) \mean{f}(\vec{y}) \, d\vec{y} \, d\vec{x}$. However, this does not imply that there is no suitable tangle containing $V_S$, since a tangle of higher order containing $\CT_{G(D, w_{\delta})}(V_B)$ as a subset and $V_S$ as a member could still exist. 
\end{remark}

In fact, the proof arguments yield that, for every $\epsilon>0$, the order of the tangle is with probability $1 - O\left(\tfrac{1}{n}\right)$ at least $(1-\varepsilon)\cdot n^2 \cdot \frac{2}{9} \mean{\nu}(B)^2$.

\Cref{thm:large_n_delta} can be used to show the existence of incomparable tangles asymptotically almost surely. However, since the constants hidden in the asymptotic expression for the bound are large, for small numbers of data points, it only yields a trivial bound. Hence, we intend to make reasonably stricter requirements for the data distribution that allow us to find a probability bound with significantly faster convergence to $1$. As a first step, we derive an upper bound on the size of a cut induced by a set $S \subseteq \RR^d$.

\begin{lemma}    \label{lem:bound_kappa}
    Suppose $\delta > 0$, $d,n \in \NN_{>0}$, $D\in\RR^{d\times n}$ and let $S \subseteq \RR^d$ be a Borel set. We set $A\coloneqq B_{\delta}[S] \cap B_{\delta}[\compl{S}]$. Then
    $\kappa_{G(D, w_\delta)}(V_S) \leq \frac{1}{4}\size{V_A}^2$.
\end{lemma}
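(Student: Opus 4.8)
The plan is to exploit that in the \deltagraph\ every edge weight equals $1$, so by \Cref{ex:edge-con} the quantity $\kappa_{G(D,w_\delta)}(V_S)$ is simply the \emph{number} of edges crossing the cut between $V_S$ and $\compl{V_S}$. Concretely, a crossing edge is an unordered pair $\{i,j\}$ for which (say) $\vec{x}_i \in S$, $\vec{x}_j \in \compl{S}$, and $\lVert \vec{x}_i - \vec{x}_j\rVert \leq \delta$. My goal is to show that both endpoints of any such edge lie in $V_A$ and then finish by counting.

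The key step is the observation that the endpoints of every crossing edge lie in $A = B_\delta[S] \cap B_\delta[\compl{S}]$. Indeed, suppose $\vec{x}_i \in S$ and $\vec{x}_j \in \compl{S}$ with $\lVert \vec{x}_i - \vec{x}_j\rVert \leq \delta$. Then $\vec{x}_i \in S \subseteq B_\delta[S]$, and at the same time $\vec{x}_i \in B_\delta[\compl{S}]$ because $\vec{x}_j \in \compl{S}$ is a point of $\compl{S}$ within distance $\delta$ of $\vec{x}_i$; hence $\vec{x}_i \in A$, i.e.\ $i \in V_A$. By the symmetric argument $\vec{x}_j \in A$ as well, so $j \in V_A$. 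This is exactly where the definition of $A$ is tailored to capture the ``boundary zone'' in which crossing edges can occur, and it is the only conceptual content of the lemma.

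With this in hand, I would use that $A$ is partitioned as $A = (A \cap S) \cup (A \cap \compl{S})$, so $V_A$ is the disjoint union $V_{A\cap S} \cup V_{A\cap\compl{S}}$ and thus $\size{V_A} = \size{V_{A\cap S}} + \size{V_{A\cap\compl{S}}}$. Every crossing edge runs between these two parts, so the number of crossing edges is at most the number of pairs with one index in each part, namely $\size{V_{A\cap S}}\cdot\size{V_{A\cap\compl{S}}}$. Writing $a \coloneqq \size{V_{A\cap S}}$ and $b \coloneqq \size{V_{A\cap\compl{S}}}$, the AM--GM inequality gives $ab \leq \tfrac14(a+b)^2 = \tfrac14\size{V_A}^2$, which is the claimed bound.

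I do not expect any real technical obstacle: once the boundary-zone observation of the second paragraph is established, the remainder is a one-line counting argument closed off by AM--GM. The only point requiring mild care is bookkeeping the crossing edges as unordered pairs while allowing either endpoint to lie in $S$, but this is handled cleanly by viewing each crossing edge as a pair with exactly one endpoint in each of the two parts of $V_A$.
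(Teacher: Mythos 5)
Your proposal is correct and follows essentially the same route as the paper's proof: both identify the two disjoint boundary zones $A \cap S = B_\delta[\compl{S}] \cap S$ and $A \cap \compl{S} = B_\delta[S] \cap \compl{S}$, show every crossing edge has one endpoint in each, bound the cut value by the product of their cardinalities, and finish with $ab \leq \tfrac{1}{4}(a+b)^2$. No gaps.
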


\begin{proof}
    Let $E$ be the edge set of $G(D, w_\delta)$ and consider an edge $\{i,j\} \in E(V_S, V_{\compl{S}})$ with $i \in V_S$ and $j \in V_{\compl{S}}$. By definition of the \deltagraph\ (\Cref{ex:deltagraph}), we know $\lVert \vec{x}_i - \vec{x}_j\rVert \leq \delta$. Hence $\vec{x}_i \in B_{\delta}[\vec{x}_j] \cap S \subseteq B_{\delta}[\compl{S}] \cap S$ and $\vec{x}_j \in B_{\delta}[\vec{x}_i] \cap \compl{S} \subseteq B_{\delta}[S] \cap \compl{S}$, which yields $(i,j) \in V_{B_{\delta}[\compl{S}] \cap S} \times V_{B_{\delta}[S] \cap \compl{S}}$.
    Since $B_{\delta}[\compl{S}] \cap S$ and $B_{\delta}[S] \cap \compl{S}$ are disjoint with $(B_{\delta}[\compl{S}] \cap S) \cup (B_{\delta}[S] \cap \compl{S}) = A$, we can use the well-known inequality $a\cdot b \leq \frac{1}{4}(a+b)^2$ for reals $a,b$ to conclude
    \[
    \kappa_{G(D, w_\delta)}(V_S) 
    \leq 
    \size{V_{B_{\delta}[\compl{S}] \cap S}}
    \cdot 
    \size{V_{B_{\delta}[S] \cap \compl{S}}}
    \leq 
    \frac{1}{4}
    \left( 
    \size{V_{B_{\delta}[\compl{S}] \cap S}} 
    + 
    \size{V_{B_{\delta}[S] \cap \compl{S}}}
    \right)^2 =  \frac{1}{4}\size{V_A}^2.
    \]
\end{proof}

Using the lemma, we can answer the question whether a cut is oriented by the considered tangle by checking if a sum of independent random variables is negative. Then, Hoeffding's and Berry-Esseen's Inequalities (Theorems \ref{cor:hoeffding} and \ref{thm:berry-esseen}) yield the following. 

\begin{theorem}    \label{thm:small_n_delta}
    Suppose $\delta>0$, $d, m \in \NN_{>0}$ and let $(r_1, \vec{\mu}_1, \sigma_1), \ldots, (r_m, \vec{\mu}_m, \sigma_m)$ be as in \Cref{def:dataset}. 
    Choose a $k \in [m]$ and define $B \coloneqq B_{\frac{\delta}{2}}[\vec{\mu_k}]$.
    Let $S$ be a Borel set with $B \subseteq S$. We set $A\coloneqq B_{\delta}[S] \cap B_{\delta}[\compl{S}]$ and suppose
    $
        2 \sqrt{2} \mean{\nu}(B) > 3\mean{\nu}(A).
    $
   
    Let $n$ and $\ell$ be compatible and $\DD = \mathbb{D}[(r_k, \vec{\mu}_k, \sigma_k)_{k \in [m]},n,\ell]$. For all $i \in [n]$, we define random variables $Y_i \coloneqq (3 \cdot \mathds{1}_{A} - 2\sqrt{2} \cdot \mathds{1}_{B})(X_i)$ and set $\rho_i \coloneqq \EX(\size{Y_i-\EX(Y_i)}^3)$. Then
    \begin{align*}
    &\Pr_{D \sim \mathbb{D}}\big( \CT_{G(D, w_{\delta})}(V_B) \text{ is a tangle that contains } V_S\big) \\
    &{}\geq{} \ \max\Bigg\{1 - \exp\left(-n \cdot \frac{2}{(2\sqrt{2} + 3)^2}\cdot(3\mean{\nu}(A) - 2 \sqrt{2} \mean{\nu}(B)\big)^2\right),\\&\phantom{{}\geq{} \ \max\bigg\}} \Phi_{0,1}\left(\frac{-\sum_{i=1}^n \EX(Y_i)}{\sqrt{\sum_{i=1}^n \Var(Y_i)}}\right) -  \frac{C \cdot\sum_{i=1}^n \rho_i}{(\sum_{i=1}^n \Var(Y_i))^{\frac{3}{2}}}\Bigg\} \\
    &\phantom{{}\geq{} \ }- \left(1 +  \sum_{k=1}^m  \frac{n_k \cdot \nu_k(B)}{1 - \nu_k(B)}\right) \cdot \prod_{k=1}^m \big(1 - \nu_k(B)\big)^{n_k}
    \end{align*}
    where $C$ is the constant from \Cref{thm:berry-esseen}.
    As a consequence, 
    \[\Pr_{D \sim \mathbb{D}}\big( \CT_{G(D, w_{\delta})}(V_B) \text{ is a tangle that contains } V_S \big) = 1 - e^{-\Omega(n)}.
    \] 
\end{theorem}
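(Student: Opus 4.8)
The plan is to reduce the event ``$\CT_{G(D,w_{\delta})}(V_B)$ is a tangle containing $V_S$'' to the conjunction of two much simpler events, each controlled by a single sum of independent bounded random variables, and then to invoke \Cref{cor:hoeffding} and \Cref{thm:berry-esseen} together with \Cref{lem:basic_computations_app}. First I would unpack the membership conditions. Since $B = B_{\frac{\delta}{2}}[\vec\mu_k]$ is a ball of radius $\frac{\delta}{2}$, any two of its points lie within distance $\delta$, so $G(D,w_{\delta})[V_B]$ is a clique with $w_{V_B}=1$; hence \Cref{lem:clique-tangle} guarantees that $\CT_{G(D,w_{\delta})}(V_B)$ is a tangle as soon as $\size{V_B}\geq 2$. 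For $V_S$ to be a member, the notation requires $\size{V_B\cap V_S}>\size{V_B\setminus V_S}$ together with $\kappa_{G(D,w_{\delta})}(V_S)<\tfrac{2}{9}\size{V_B}^2$. The hypothesis $B\subseteq S$ makes the first condition automatic: it forces $V_B\subseteq V_S$, so $\size{V_B\setminus V_S}=0$ and the inequality reduces to $\size{V_B}>0$, which follows from $\size{V_B}\geq 2$.

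For the order condition I would apply \Cref{lem:bound_kappa}, which bounds $\kappa_{G(D,w_{\delta})}(V_S)\leq\tfrac14\size{V_A}^2$; thus it suffices to ensure $\tfrac14\size{V_A}^2<\tfrac29\size{V_B}^2$. Taking nonnegative square roots, this is exactly $3\size{V_A}-2\sqrt2\,\size{V_B}<0$, i.e.\ $\sum_{i=1}^n Y_i<0$ for the $Y_i=(3\cdot\mathds{1}_A-2\sqrt2\cdot\mathds{1}_B)(X_i)$ from the statement. Consequently the target event contains $\{\sum_i Y_i<0\}\cap\{\size{V_B}\geq 2\}$, and the union bound gives a lower bound of the form $\Pr(\sum_i Y_i\leq 0)-\Pr(\size{V_B}<2)$; here I would replace $<$ by $\leq$ using the observation that on $\{\size{V_B}\geq 2\}$ one has $\sum_i Y_i\neq 0$, because $3\size{V_A}=2\sqrt2\,\size{V_B}$ with integer cardinalities forces $\size{V_B}=0$. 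The subtracted term $\Pr(\size{V_B}<2)$ is computed exactly by \Cref{lem:basic_computations_app}.\ref{enum:size_geq_two_app} and yields the product in the statement. For $\Pr(\sum_i Y_i\leq 0)$ I would produce two lower bounds and take their maximum. Each $Y_i$ takes values in $\{0,3,-2\sqrt2,3-2\sqrt2\}$, hence lies in $[-2\sqrt2,3]$ of length $2\sqrt2+3$, and since $\tfrac1n\sum_i\nu_{\ell(i)}=\mean{\nu}$ one gets $\EX(\mean{Y})=3\mean{\nu}(A)-2\sqrt2\,\mean{\nu}(B)$, negative precisely by the hypothesis $2\sqrt2\,\mean{\nu}(B)>3\mean{\nu}(A)$. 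Feeding $a_i=-2\sqrt2$, $b_i=3$ and this mean into \Cref{cor:hoeffding} gives the Hoeffding term, while standardizing $\sum_i Y_i$ and applying \Cref{thm:berry-esseen} at $x^\ast=-\sum_i\EX(Y_i)/\sqrt{\sum_i\Var(Y_i)}$ gives the Berry--Esseen term. Combining produces the displayed inequality.

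For the asymptotic statement I would check that all three ingredients have the claimed order in $n$. The Hoeffding exponent is linear in $n$ with a strictly negative, $n$-independent coefficient (as $\EX(\mean{Y})<0$ is a fixed constant), so it contributes $1-e^{-\Omega(n)}$; and in the correction term each $\nu_k(B)\in(0,1)$, so $\prod_k(1-\nu_k(B))^{n_k}=e^{-\Omega(n)}$ even after the $O(n)$ polynomial prefactor. Hence the whole right-hand side is $(1-e^{-\Omega(n)})-e^{-\Omega(n)}=1-e^{-\Omega(n)}$.

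The main obstacle I anticipate is not any single estimate but the careful bookkeeping in the reduction: one must correctly pin down the value range $[-2\sqrt2,3]$ of the $Y_i$ (so that Hoeffding is applied with the right $b_i-a_i=2\sqrt2+3$), must exploit $B\subseteq S$ to dispose of the cardinality condition, and — crucially — must treat $\size{V_B}\geq 2$ as a genuinely separate event, since $\sum_i Y_i<0$ alone does not force a nontrivial clique (for instance $\size{V_B}=1$, $\size{V_A}=0$ already makes the sum negative). The subtlest technical point is the strict-versus-weak inequality in passing from $\Pr(\sum_i Y_i<0)$ to the distribution function $F_n$ used in Berry--Esseen, which is resolved exactly by the irrationality argument that rules out $\sum_i Y_i=0$ on the event $\size{V_B}\geq 2$.
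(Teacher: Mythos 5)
Your proposal is correct and follows essentially the same route as the paper's own proof: reduce membership of $V_S$ to $\frac{2}{9}\size{V_B}^2 > \kappa(V_S)$ via \Cref{lem:clique-tangle} and \Cref{lem:bound_kappa}, rewrite $3\size{V_A} - 2\sqrt{2}\size{V_B}$ as $\sum_i Y_i$, combine the events by the union bound (using the irrationality of $\sqrt{2}$ to pass between strict and weak inequalities), bound $\Pr(\sum_i Y_i \leq 0)$ by the maximum of the Hoeffding and Berry--Esseen estimates, and handle $\Pr(\size{V_B}\geq 2)$ and the exponential asymptotics via \Cref{lem:basic_computations_app}. The bookkeeping details you flag (value range $[-2\sqrt{2},3]$, treating $\size{V_B}\geq 2$ as a separate event, the strict-versus-weak issue) are exactly the ones the paper's proof also addresses, and your resolutions match.
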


\begin{proof}
       For better readability, let $G \coloneqq G(D, w_{\delta})$ and $\EX \coloneqq \EX_{D \sim \DD}$.  As we already observed in the proof of \Cref{thm:large_n_delta}, the set $V_B$ induces a clique in $G$ with $w_{V_B} = 1$ and hence, by \Cref{lem:clique-tangle}, $\CT_G(V_B)$ is a tangle as soon as $\size{V_B}\geq 2$. Since $B \subseteq S$, the set $V_S$ is a member of $\CT_G(V_B)$ as soon as $\frac{2}{9} \size{V_B}^2 > \kappa_G(V_S)$. By \Cref{lem:bound_kappa}, this is true if $\frac{2}{9} \size{V_B}^2 > \frac{1}{4}\size{V_A}^2$ or, equivalently, $2\sqrt{2}\size{V_B} > 3\size{V_A}$.
    Hence,
    \begin{align*}
        \Pr_{D \sim \DD}&(\CT_G \text{ is a tangle that contains } V_S)\\
        &\geq 
        \Pr_{D \sim \DD}\left(2\sqrt{2} \size{V_B} > 3\size{V_A} \text{ and } \size{V_B}\geq 2\right)\\
        &=\Pr_{D \sim \DD}\left(2\sqrt{2} \size{V_B} \geq 3\size{V_A} \text{ and } \size{V_B}\geq 2\right)\\
        &\geq\Pr_{D \sim \DD}\left(3\size{V_A} \mspace{-2mu} - \mspace{-2mu} 2\sqrt{2} \size{V_B} \mspace{-1mu} \leq \mspace{-1mu} 0\right) \mspace{-2mu} + \mspace{-4mu} \Pr_{D \sim \DD}(\size{V_B} \mspace{-1mu} \geq \mspace{-1mu} 2) \mspace{-2mu} - \mspace{-2mu} 1 \,
    \end{align*}
    where the second step follows from the fact that $\sqrt{2}$ is irrational, and the third step is a union bound.
    Expressing $\size{V_A}$ and $\size{V_B}$ as a sum of the indicator variables now yields
    \begin{align*}
        3\size{V_A} - 2\sqrt{2} \size{V_B} &= 3\sum_{i=1}^n \cdot\mathds{1}_A(X_i) - 2\sqrt{2} \sum_{i=1}^n \cdot\mathds{1}_B(X_i)\\ &= \sum_{i=1}^n \left(3\cdot\mathds{1}_A - 2\sqrt{2}\cdot\mathds{1}_B\right)(X_i) = \sum_{i=1}^n Y_i\ .
    \end{align*}
    Together, 
    \begin{multline*}
        \Pr\limits_{D \sim \mathbb{D}}\mspace{-3mu} \left( \CT_{G(D, w_{\delta})}(V_B) \text{ is a tangle that contains } V_S\right)\\
    \geq \Pr\limits_{D \sim \mathbb{D}}\mspace{-3mu}\left(\sum\limits_{i=1}^n Y_i \leq 0\mspace{-3mu}\right) + \Pr\limits_{D \sim \mathbb{D}}\left(\size{V_B} \geq 2\right) - 1 \ .
    \end{multline*}

Concerning the bounds on the probability that $\sum_{i=1}^n Y_i \leq 0$, we prove two bounds separately and then take the maximum. We first observe that $Y_1, \ldots, Y_n$ are independent random variables with $-2\sqrt{2} \leq Y_i \leq 3$ for all $i \in [n]$. Furthermore, for $\mean{Y} = \frac{1}{n}\sum_{i=1}^n Y_i$, we have
\begin{align*}
\EX\left(\mean{Y}\right) &= \frac{1}{n}\sum_{i=1}^n\EX\big((3 \cdot \mathds{1}_{A} - 2\sqrt{2} \cdot \mathds{1}_{B})(X_i)\big) = \frac{1}{n}\sum_{i=1}^n 3\nu_{\ell(i)}(A) - 2\sqrt{2}\nu_{\ell(i)}(B) \\&= 3\mean{\nu}(A) - 2\sqrt{2}\mean{\nu}(B) < 0.
\end{align*}
Now the first bound follows from \Cref{cor:hoeffding} with $t = -\EX(\mean{Y})$, since
\begin{align*}
    \Pr_{D \sim \DD}\left(\sum_{i=1}^n Y_i \leq 0\right) &\geq 1 -  \Pr_{D \sim \DD}(\mean{Y} - \EX(\mean{Y}) \\
    &\geq - \EX(\mean{Y})) > 1 - \left(-\frac{2n^2\EX(\mean{Y})^2}{\sum_{i=1}^n (3 + 2\sqrt{2})^2}\right) \\
    &=1 - \exp\left(-n \cdot \frac{2}{(2\sqrt{2} + 3)^2}\cdot(3\mean{\nu}(A) - 2 \sqrt{2} \mean{\nu}(B))^2\right)
\end{align*}
and the second bound follows from \Cref{thm:berry-esseen}, since
\begin{align*}
\Pr_{D \sim \DD}\left(\sum_{i=1}^n Y_i \leq 0\right) &= \Pr_{D \sim \DD}\left(\frac{\sum_{i=1}^n Y_i - \sum_{i=1}^n \EX(Y_i)}{\sqrt{\sum_{i=1}^n \Var(Y_i)}} \leq \frac{- \sum_{i=1}^n \EX(Y_i)}{\sqrt{\sum_{i=1}^n \Var(Y_i)}}\right) \\
&\geq \Phi_{0,1}\left(\frac{- \sum_{i=1}^n \EX(Y_i)}{\sqrt{\sum_{i=1}^n \Var(Y_i)}}\right) - \frac{C \cdot \sum_{i=1}^n \rho_i}{(\sum_{i=1}^n \Var(Y_i))^\frac{3}{2}}
\end{align*}
For the probability that $\size{V_B} \geq 2$, we can apply \Cref{lem:basic_computations_app}, Part \ref{enum:size_geq_two_app}. Altogether, we obtain the desired lower bound.

To prove the lower asymptotic bound of $1 - e^{-\Omega(n)}$, we first observe that
\begin{multline*}
\left(1 +  \sum_{k=1}^m  \frac{n_k \cdot \nu_k(B)}{1 - \nu_k(B)}\right) \cdot \prod_{k=1}^m \big(1 - \nu_k(B)\big)^{n_k}\\
= \left(1 + n\cdot \sum_{k=1}^m  \frac{r_k \cdot \nu_k(B)}{1 - \nu_k(B)}\right) \cdot \left(\prod_{k=1}^m \big(1 - \nu_k(B)\big)^{r_k}\right)^n\mspace{-4mu},
\end{multline*}
which shows that there are constants $c_1, c_2 >0$ and $c_3 \in (0,1)$ such that
\begin{align*}
    \Pr_{D \sim \mathbb{D}}&\big( \CT_{G(D, w_{\delta})}(V_B) \text{ is a tangle that contains } V_S\big) \\
    {}\geq{}& 1 - \exp\left(-n \cdot \frac{2 \cdot (3\mean{\nu}(A) - 2 \sqrt{2} \mean{\nu}(B)\big)^2}{(2\sqrt{2} + 3)^2}\right)\\
    	&- \left(1 +  \sum_{k=1}^m  \frac{n_k \cdot \nu_k(B)}{1 - \nu_k(B)}\right) \cdot \prod_{k=1}^m \big(1 - \nu_k(B)\big)^{n_k}\\
    {}={} &1 - e^{-c_1 \cdot n} - (1 + c_2\cdot n)\cdot c_3^n = 1 - e^{-c_1 \cdot n} - e^{\ln(1 + c_2\cdot n) + \ln(c_3) \cdot n} \ .
    \end{align*}
    Since $\ln(c_3) < 0$ and $\ln(1 + c_2\cdot n) = o(n)$, this completes the proof.
\end{proof}

Note that the bound derived from Berry-Esseen’s Inequality (\Cref{thm:berry-esseen}) only yields an asymptotic bound of $1 - O(\frac{1}{\sqrt{n}})$.
However, when $n$ is small, it often gives better results than the bound derived from  Hoeffding’s Inequality.
Using the union bound with the event $A_k$ being ``$\CT_{G(D, w_{\delta})}(V_{B_{\frac{\delta}{2}}[\vec{\mu}_k]})$ is a tangle that contains $V_S$'', we can apply Theorems \ref{thm:large_n_delta} and~\ref{thm:small_n_delta} to bound the probability that the tangles we consider are incomparable.

\begin{corollary}
Let $\delta > 0$, $d, m \in \NN_{>0}$ and let $(r_1, \vec{\mu}_1, \sigma_1), \ldots, (r_m, \vec{\mu}_m, \sigma_m)$ be as in \Cref{def:dataset}.  Suppose that there are Borel sets $S_{k_1, k_2}$ for all $\{k_1, k_2\} \in \binom{[m]}{2}$ with $S_{k_1, k_2} = S_{k_2,k_1}^c$.
If for each pair $(k_1,k_2)$, the conditions of \Cref{thm:large_n_delta} with $k = k_1$ and $S = S_{k_1, k_2}$ are fulfilled, then 
\[
\smash{\Pr_{D \sim \DD}}\big((\CT_{G(D, w_{\delta})}(V_{B_{\frac{\delta}{2}}[\vec{\mu}_k]}))_{k\in[m]} \text{ are pairwise incomparable tangles}\big) = 1 - O\left(\tfrac{1}{n}\right)
\]
for compatible $n$ and $\ell$ and $\mathbb{D} = \mathbb{D}[(r_k, \vec{\mu}_k, \sigma_k)_{k \in [m]},n,\ell]$. If all $k_1$ and $S_{k_1,k_2}$  meet the conditions of \Cref{thm:small_n_delta},  then 
\[
\smash{\Pr_{D \sim \DD}}\big((\CT_{G(D, w_{\delta})}(V_{B_{\frac{\delta}{2}}[\vec{\mu}_k]}))_{k\in[m]} \text{ are pairwise incomparable tangles}\big) = 1 - e^{-\Omega(n)} \ .
\]
\label{cor:inc_tangles_delta}
\end{corollary}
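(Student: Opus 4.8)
The plan is to witness the incomparability of each pair of tangles by a single separation, and then to combine the per-pair guarantees through a union bound over a number of events that is constant in $n$. Throughout, write $\CT_k \coloneqq \CT_{G(D, w_{\delta})}(V_{B_{\frac{\delta}{2}}[\vec\mu_k]})$. Fix an unordered pair $\{k_1, k_2\} \in \binom{[m]}{2}$ and recall the standing assumption $S_{k_1,k_2} = \compl{S_{k_2,k_1}}$ as Borel sets. Since every data point lies either in $S_{k_1,k_2}$ or in its complement, on the level of the vertex set this translates into $V_{S_{k_2,k_1}} = V_{\compl{S_{k_1,k_2}}} = [n] \setminus V_{S_{k_1,k_2}} = \compl{V_{S_{k_1,k_2}}}$, where the complement is taken in the ground set $[n]$. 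Consequently, if on a single data matrix $D$ we can arrange that $V_{S_{k_1,k_2}} \in \CT_{k_1}$ and, simultaneously, $V_{S_{k_2,k_1}} \in \CT_{k_2}$, then the separation $\{V_{S_{k_1,k_2}}, \compl{V_{S_{k_1,k_2}}}\}$ is oriented oppositely by $\CT_{k_1}$ and $\CT_{k_2}$, which is exactly the definition of incomparability.

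For the first claim, I would invoke \Cref{thm:large_n_delta} once for every ordered pair $(k_1,k_2)$ with $k_1 \neq k_2$, each time with $k = k_1$ and $S = S_{k_1,k_2}$; the hypotheses of the corollary are precisely what licenses each of these applications. Each application yields that the event $A_{k_1,k_2}$, namely ``$\CT_{k_1}$ is a tangle that contains $V_{S_{k_1,k_2}}$'', has probability $1 - O(\tfrac1n)$. There are exactly $m(m-1)$ such ordered pairs, a quantity independent of $n$. Applying the union bound in the form $\Pr(\bigwedge_i A_i) \geq 1 - \sum_i (1 - \Pr(A_i))$, the event that all $A_{k_1,k_2}$ hold simultaneously has probability $1 - m(m-1)\cdot O(\tfrac1n) = 1 - O(\tfrac1n)$. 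On this joint event, each $\CT_k$ is a genuine tangle and, by the identification from the first paragraph, every pair $\{k_1,k_2\}$ is incomparable, which establishes the first displayed bound.

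The second claim is entirely analogous. One replaces \Cref{thm:large_n_delta} by \Cref{thm:small_n_delta}, whose additional hypothesis $B_{\frac{\delta}{2}}[\vec\mu_{k_1}] \subseteq S_{k_1,k_2}$ is exactly the extra assumption the corollary makes in this case. This upgrades each $\Pr(A_{k_1,k_2})$ to $1 - e^{-\Omega(n)}$. The same union bound over the $m(m-1)$ ordered pairs then bounds the failure probability by $m(m-1)\cdot e^{-\Omega(n)}$, and since a constant multiple of $e^{-\Omega(n)}$ is again $e^{-\Omega(n)}$, the joint success probability is $1 - e^{-\Omega(n)}$.

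I do not anticipate a genuine obstacle: the statement is essentially a repackaging of the two single-separation theorems. The only point requiring care --- and the sole place where the hypothesis $S_{k_1,k_2} = \compl{S_{k_2,k_1}}$ is actually used --- is the vertex-level identification $V_{S_{k_2,k_1}} = \compl{V_{S_{k_1,k_2}}}$, which guarantees that one and the same vertex separation is the witness oriented in opposite directions by the two tangles. Everything else is a finite union bound whose multiplicative constant $m(m-1)$ is harmless, since $m$ is fixed while $n \to \infty$.
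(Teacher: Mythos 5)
Your proposal is correct and matches the paper's intended argument: the paper proves this corollary exactly by applying Theorem~\ref{thm:large_n_delta} (respectively Theorem~\ref{thm:small_n_delta}) to each pair with the events ``$\CT_{G(D,w_\delta)}(V_{B_{\delta/2}[\vec{\mu}_{k_1}]})$ is a tangle containing $V_{S_{k_1,k_2}}$'' and combining them via a union bound over the constantly many (in $n$) pairs. Your explicit verification that $S_{k_1,k_2}=\compl{S_{k_2,k_1}}$ yields $V_{S_{k_2,k_1}}=\compl{V_{S_{k_1,k_2}}}$ at the vertex level, so that a single separation witnesses incomparability, is precisely the point the paper leaves implicit.
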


\subsection{Computational results}\label{subsec:comp}

So far, we have seen theorems of the form ``If some condition is met, there will be incomparable tangles with at least probability $P$''. However, it is not obvious to see for which parameters their conditions are fulfilled and which bounds they yield. In this section, we provide an analysis of the implications of our theorems for some explicit parameter sets, i.e., we study the significance of the theoretically obtained bounds.

We begin our analysis with the case of two Gaussians, starting with an in-depth study of the one-dimensional setting followed by the generalization to higher dimensions. Then, we analyse a case of more than two marginals by studying a mixture of three Gaussians whose means form an equilateral triangle. We explore reasonable choices for the Borel set $S$ that defines the candidate low-order cut such that the conditions for our probability bounds are met also for Gaussian mixtures where the means of the marginals are close to each other.
Since the structural properties of the data only depend on the ratio of the distances of the means and the standard deviation\footnote{The ratio $s^2=\frac{\lambda^2}{\sigma^2}$ of the smallest distance of means and the standard deviation is often called signal-to-noise ratio (SNR).}, we can normalize and restrict the following analysis to mixtures where all marginal distributions have standard deviation 1 or, in the case of a mixture with different standard deviations, we assume that at least one marginal has standard deviation 1.

We start with the \basecase\ of two one-dimensional Gaussians with means $0$ and $\lambda$ and equal standard deviations $\sigma$ and equally many points drawn from each.
In this setting, we need $\lambda>2\sigma$ in order to have more than one dense region in the data, because for $\lambda\leq 2\sigma$, the mean density $\bar{f}$ only has one maximum at $\frac{\lambda}{2}$.
As the low-order cut used to distinguish the tangles, we choose the cut at $\frac{\lambda}{2}$, so $S=\left(-\infty,\frac{\lambda}{2}\right]$. As shown in \Cref{fig:small_n_1dim_delta_mu}, \Cref{thm:small_n_delta} yields good lower bounds already for small data sets.

This setting can be generalized in three ways.
First, we vary the fraction $r$ of data points drawn from one marginal distribution, i.e., we assume $rn$ data points are drawn from the first marginal and $(1-r)n$ from the second marginal. Here, the considered cut is at the local minimum of the average density. The implied probability bounds and bounds on the distance of means for this case are shown in \Cref{fig:small_n_1dim_delta_frac_largeN,fig:1dim_2distr_expectation_mixing_delta}.
Next, we vary the ratio $\alpha$ of the standard deviations, see \Cref{fig:1dim_2distr_expectation_mixing_sigma}. Again, we separate the regions at the local minimum of the mean distribution. In both of these first two generalizations, we can compute the smallest distance of the means where the conditions of \Cref{cor:inc_tangles_delta} can be met via the following lemma.

\begin{lemma}\label{thm:limit_delta_1dim}
    Let $\lambda, \sigma, \alpha > \mspace{-2mu} 0$ and $r_1\mspace{-2mu}\in\mspace{-2mu}(0,1)$. Let $m \coloneqq 2$, $\mu_1 \coloneqq 0$, $\mu_2 \coloneqq \lambda$, $\sigma_1 \coloneqq \sigma$, $\sigma_2 \coloneqq \alpha \cdot \sigma$, $r_2 \coloneqq 1-r_1$. Then with $c\coloneqq\operatorname{argmin}_{0\leq x \leq\lambda} \bar{f}(x)$ and \Cref{not:density}, the following hold for compatible $n$ and $\ell$ and $\mathbb{D} = \mathbb{D}[((r, 0, \sigma),(1-r,\lambda,\alpha\sigma)),n,\ell]$.
    \begin{enumerate}
	    \item If $\frac{2}{3} \cdot \min \{\bar{f}(0), \bar{f}(\lambda)\} > \bar{f}(c)$, there is a $\delta > 0$ such that
	\[
	\smash{\Pr_{D \sim \mathbb{D}}}\left(\begin{matrix} \CT_{G(D, w_{\delta})}(
	V_{B_{\delta/2}[0]}) \text{ and } \CT_{G(D, w_{\delta})}(V_{B_{\delta/2}[\lambda]})\\ \text{ are incomparable tangles}
	\end{matrix}\right) = 1 - O\left(\tfrac{1}{n}\right) \  .
	\]
	\item If $\frac{\sqrt{2}}{3} \cdot \min \{\bar{f}(0), \bar{f}(\lambda)\} > \bar{f}(c)$, there is a $\delta > 0$ such that
	\[
	\smash{\Pr_{D \sim \mathbb{D}}}\left( \begin{matrix}
	\CT_{G(D, w_{\delta})}(V_{B_{\delta/2}[0]}) \text{ and } \CT_{G(D, w_{\delta})}(V_{B_{\delta/2}[\lambda]})\\ \text{ are incomparable tangles}
	\end{matrix}\right) = 1 - e^{-\Omega(n)} \ .
	\]
\end{enumerate}
\end{lemma}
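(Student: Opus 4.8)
The plan is to take the separating set to be the half-line cut at the interior minimum $c$ and then verify, for all sufficiently small $\delta$, the hypotheses of \Cref{thm:large_n_delta} (for Part~1) and of \Cref{thm:small_n_delta} (for Part~2), so that \Cref{cor:inc_tangles_delta} with $m=2$ delivers the claim. Concretely, I would set $S_{1,2} \coloneqq (-\infty, c]$ and $S_{2,1} \coloneqq \compl{S_{1,2}} = (c,\infty)$; these are admissible for the corollary since they are complementary. A preliminary observation is that the hypothesis forces $c$ into the open interval $(0,\lambda)$: if $c \in \{0,\lambda\}$, then $\bar f(c) = \min\{\bar f(0),\bar f(\lambda)\}$, which contradicts $\frac23\min\{\bar f(0),\bar f(\lambda)\} > \bar f(c)$ (and likewise the $\frac{\sqrt 2}{3}$ variant). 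Hence for all small $\delta$ the balls $B_{\delta/2}[0]$ and $B_{\delta/2}[\lambda]$ lie on opposite sides of $c$; in particular $B_{\delta/2}[0] \subseteq S_{1,2}$ and $B_{\delta/2}[\lambda] \subseteq S_{2,1}$, which supplies the containment $B \subseteq S$ required by \Cref{thm:small_n_delta}.

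The computational heart consists of three leading-order estimates as $\delta \to 0$, all immediate from continuity of $\bar f$. Writing $B \coloneqq B_{\delta/2}[\mu]$ for $\mu \in \{0,\lambda\}$, the ball is the interval of length $\delta$ centered at $\mu$, so
\[
\bar\nu(B) = \int_{\mu-\delta/2}^{\mu+\delta/2} \bar f(x)\,dx = \delta\,\bar f(\mu)\,(1+o(1)).
\]
For either choice of $S$ one computes $A = B_\delta[S] \cap B_\delta[\compl{S}] = (c-\delta,\,c+\delta]$, an interval of length $2\delta$ around $c$, whence $\bar\nu(A) = 2\delta\,\bar f(c)\,(1+o(1))$. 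Finally, the cut integral of \Cref{thm:large_n_delta} is supported on $\{x \le c < y,\ y-x \le \delta\}$, a triangular region of area $\tfrac{\delta^2}{2}$ on which $\bar f(x)\bar f(y) = \bar f(c)^2(1+o(1))$, giving
\[
\int_{S}\int_{\compl{S}} w_\delta(x,y)\,\bar f(x)\bar f(y)\,dy\,dx = \tfrac{\delta^2}{2}\,\bar f(c)^2\,(1+o(1)).
\]

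With these in hand, both parts follow by substitution. For Part~1, the first hypothesis of \Cref{thm:large_n_delta}, namely $\bar\nu(B\cap S) > \tfrac12\bar\nu(B)$, is trivial since $B \subseteq S$ forces $B\cap S = B$; the second reduces to leading order to $\tfrac29\,\delta^2\bar f(\mu)^2 > \tfrac{\delta^2}{2}\bar f(c)^2$, i.e.\ $\tfrac23\bar f(\mu) > \bar f(c)$, which holds for each $\mu \in \{0,\lambda\}$ by assumption, so a small enough $\delta$ validates both ordered pairs and yields the $1-O(1/n)$ bound. For Part~2, the single hypothesis $2\sqrt2\,\bar\nu(B) > 3\,\bar\nu(A)$ becomes $2\sqrt2\,\delta\bar f(\mu) > 6\delta\bar f(c)$, i.e.\ $\tfrac{\sqrt2}{3}\bar f(\mu) > \bar f(c)$, again holding for both means; combined with $B \subseteq S$ this gives the $1-e^{-\Omega(n)}$ bound through the corollary.

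The one genuinely delicate point is choosing a single threshold $\delta_0 > 0$ that makes the multiplicative $(1+o(1))$ factors harmless, so that the strict limiting inequalities $\tfrac23\bar f(\mu) > \bar f(c)$ (resp.\ $\tfrac{\sqrt2}{3}\bar f(\mu) > \bar f(c)$) are not eroded at the finite $\delta$ we actually use. Because these are strict and the three estimates converge to their stated limits continuously in $\delta$, this is routine; the only place demanding a little care is the triangular-region computation for the cut integral, which I would reduce to the elementary identity $\int_{c-\delta}^{c}(x+\delta-c)\,dx = \tfrac{\delta^2}{2}$ together with uniform continuity of $\bar f$ on a compact neighborhood of $c$.
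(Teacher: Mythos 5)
Your proposal is correct and follows essentially the same route as the paper's own (sketched) proof: cut at the interior minimum $c$, establish the leading-order estimates $\bar\nu(B_{\delta/2}[\mu])\sim\delta\bar f(\mu)$, $\bar\nu(A)\sim 2\delta\bar f(c)$, and cut integral $\sim\tfrac{\delta^2}{2}\bar f(c)^2$ as $\delta\to 0$, then invoke strictness of the limiting inequalities to pick a suitable $\delta>0$ and conclude via Theorems~\ref{thm:large_n_delta} and~\ref{thm:small_n_delta} together with Corollary~\ref{cor:inc_tangles_delta}. Your write-up is in fact more explicit than the paper's sketch (e.g.\ the observation that $c\in(0,\lambda)$ and the verification $B\subseteq S$), but the underlying argument is identical.
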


\noindent
\begin{figure}
	\begin{subfigure}[t]{0.49\textwidth}
		\textbf{(a)}
		\vtop{
			\vskip-1ex
			\hbox{
				\includegraphics[width=0.88\textwidth]{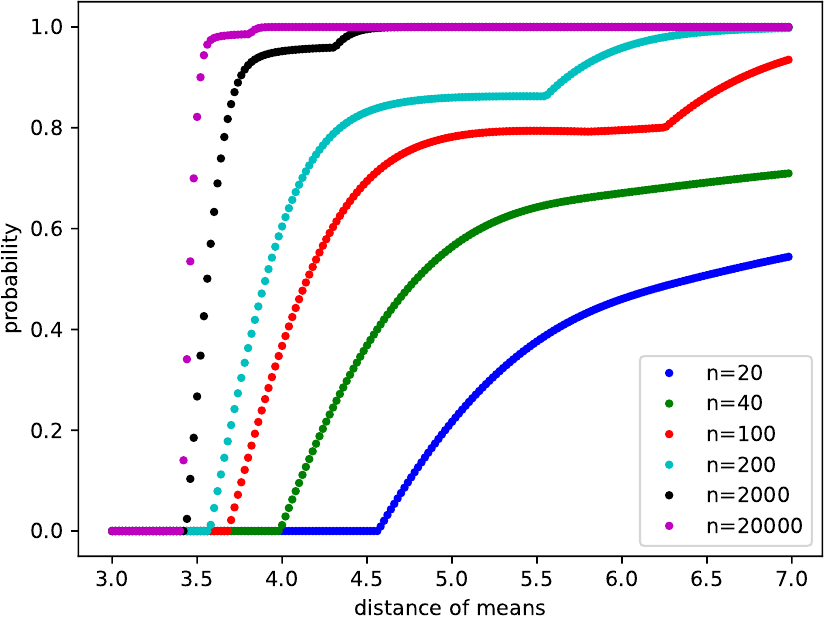}
			}
		}
		\captionsetup{width=6cm}\phantomcaption{~}
		\label{fig:small_n_1dim_delta_mu}
	\end{subfigure}~
	\begin{subfigure}[t]{0.49\textwidth}
		\textbf{(b)}
		\vtop{
			\vskip-1ex
			\hbox{
				\includegraphics[width=0.88\textwidth]{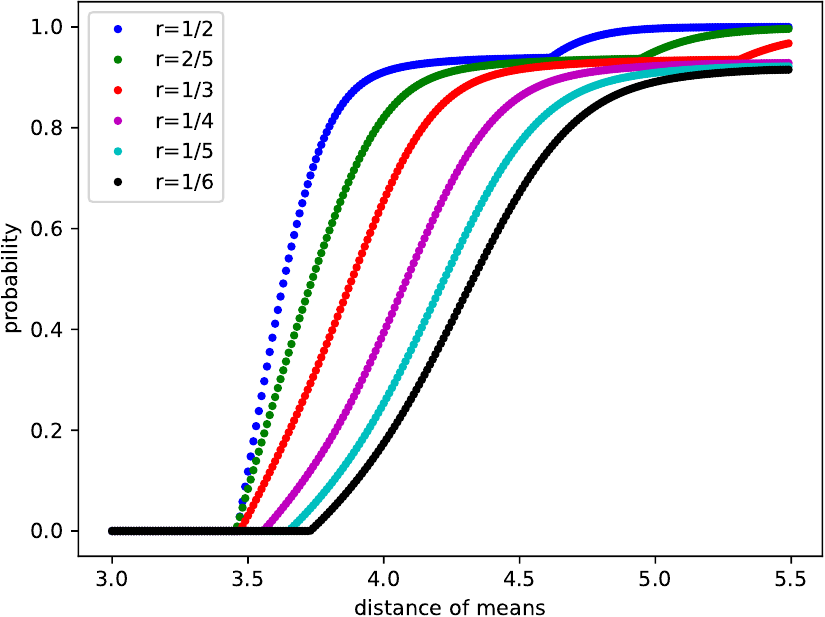}
			}
		}
		\captionsetup{width=6cm}\phantomcaption{~}
		\label{fig:small_n_1dim_delta_frac_largeN}
	\end{subfigure}\\
	\begin{subfigure}[t]{0.49\textwidth}
		\textbf{(c)}
		\vtop{
			\vskip-1ex
			\hbox{
				\includegraphics[width=0.88\textwidth]{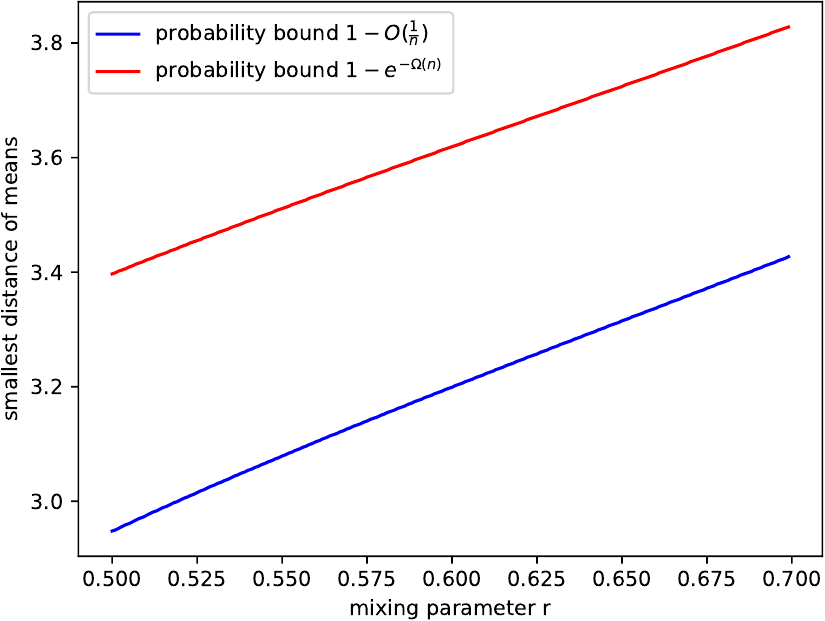}
			}
		}
		\captionsetup{width=6cm}\phantomcaption{~}
		\label{fig:1dim_2distr_expectation_mixing_delta}
	\end{subfigure}~
	\begin{subfigure}[t]{0.49\textwidth}
		\textbf{(d)}
		\vtop{
			\vskip-1ex
			\hbox{
				\includegraphics[width=0.88\textwidth]{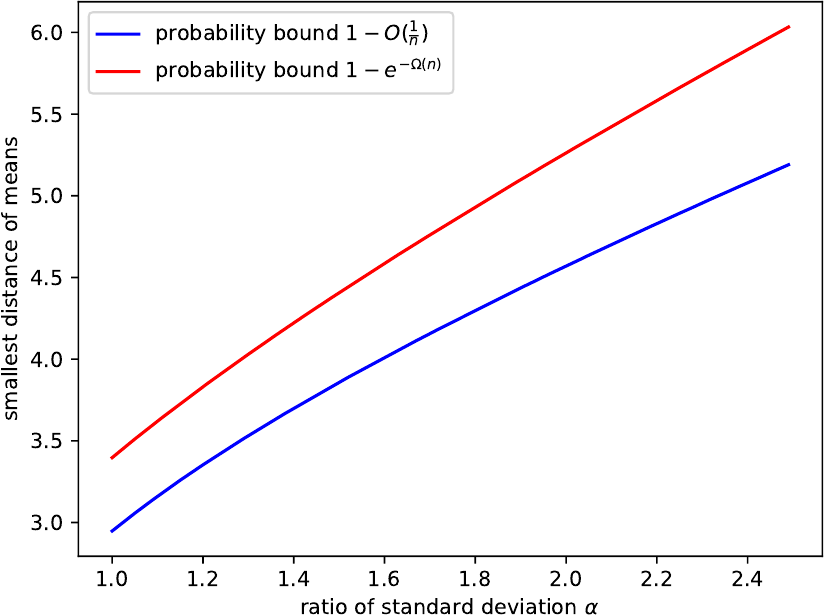}
			}
		}
		\captionsetup{width=6cm}\phantomcaption{~}
		\label{fig:1dim_2distr_expectation_mixing_sigma}
	\end{subfigure}
	
	\caption{Our computational results in the one-dimensional case.\newline
		We study the applicability of \Cref{thm:small_n_delta}, where we choose $\delta$ to optimize the probability bound. In the one-dimensional case, we assume that the means of the marginal distributions have distance $\lambda$ and the standard deviations are chosen to be $1$ and $\alpha$. From the first marginal distribution we draw $rn$ many data points, from the second one, we draw $(1-r)n$.
		\textbf{(a)} shows the probability bound dependent on $\lambda$ for different $n$ in the \basecase. 
		In
		\textbf{(b)}, we vary the mixing parameters $r$ and plot the probability bound dependent on $\lambda$ with $n=900$.
		\newline
		\textbf{(c)}, \textbf{(d)} show plots of the smallest mean distance $\lambda$ such that the conditions of \Cref{thm:limit_delta_1dim} are met.
		In \textbf{(c)} we vary the mixing parameter $r$ and in \textbf{(d)}, we vary the ratio $\alpha$ between the standard deviations. Fixing $r=1/2$ and $\alpha=1$, we obtain the bounds $\lambda>2.948$ and $\lambda>3.397$, respectively.\newline
		Note that at some point the bound corresponding to Hoeffding's Inequality (\Cref{cor:hoeffding}) becomes larger than the bounds relating to \Cref{thm:berry-esseen}.
	}
	\label{fig:results_delta}
\end{figure}

\begin{proof}[Proof sketch]
We observe for $x\in\RR$ and $\alpha >0$ that $\lim_{\delta \to 0} \frac{1}{\delta}\mean{\nu}([x - \alpha\cdot \delta, x + \alpha\cdot \delta]) = 2\alpha\cdot \mean{f}(x)$ and $\lim_{\delta \to 0} \frac{1}{\delta^2}\int_{c-\delta}^c\int_c^{x+\delta} \bar{f}(x) \bar{f}(y) \, dy \, dx = \frac{1}{2} \mean{f}(c)^2$. Thus, by the assumptions, the conditions of Theorems \ref{thm:large_n_delta} and \ref{thm:small_n_delta}, respectively, hold in the limit for $\delta \to 0$. Since the conditions of the theorems are strict inequalities, there must be a $\delta > 0$ that satisfies them.
\end{proof}

Lastly, we vary the dimension of the data.
For dimensions two and three, \Cref{thm:small_n_delta} yields probability bounds similar to the one-dimensional case, see \Cref{fig:2dim_2distr_delta} and  \Cref{fig:3dim_2distr_delta}.
Since the exact values for $\mean{\nu}(B_{\frac{\delta}{2}}[\vec{\mu_i}])$ become difficult to compute in high dimensions, we approximate them by $\mean{\nu}(C)$, where $C$ is the largest hypercube contained in the hyperball as pictured in \Cref{fig:2dim_2distr_datadistr}.
We then compute the smallest distance of means $\lambda$ for which the conditions of \Cref{thm:small_n_delta} are met, the results are shown in \Cref{fig:hdim_2distr_delta}. In accordance with our intuition, they indicate that this approach works well for low-dimensional data.

\begin{figure}
	\begin{minipage}[t]{0.3\textwidth}
		\strut\vspace*{-\baselineskip}\newline
		\begin{subfigure}[t]{0.7\textwidth}
			\begin{tikzpicture}
			\node[anchor=north east] at (0, 0) {\llap{\textbf{(a)}}};
			\node[anchor=north west] at (0, 0) {
				\begin{tikzpicture}[scale=0.23]
				\fill[fill=red!40] (-3.5,0) circle (1cm);
				\fill[fill=red!40] (3.5,0) circle (1cm);
				\fill[fill=red!85!black] (-4.2071,-0.7071) rectangle (-2.7929,0.7071);
				\fill[fill=red!85!black] (4.2071,0.7071) rectangle (2.7929,-0.7071);
				\fill[fill=blue!40] (-2,-5.5) rectangle (2,5.5);
				\draw[thick, draw=blue] (0,-5.5) -- (0,5.5);
				\end{tikzpicture}
			};
		\end{tikzpicture}
		\phantomcaption{~}
		\label{fig:2dim_2distr_datadistr}
	\end{subfigure}~\hspace{0.5cm}
\end{minipage}
\hfill
\begin{minipage}[t]{0.3\textwidth}
	\strut\vspace*{-\baselineskip}\newline
	\begin{subfigure}[t]{0.7\textwidth}
		\begin{tikzpicture}
		\node[anchor=north east] at (0, 0) {\llap{\textbf{(b)}}};
		\node[anchor=north west] at (0, 0) {
			\begin{tikzpicture}[scale=0.23]
			\fill[fill=red!40] (0,0) circle (1cm);
			\fill[fill=red!40] (7,0) circle (1cm);
			\fill[fill=red!40] (3.5, -6.0622) circle (1cm);
			\fill[fill=blue!40] (3.5,-2.0207) circle (2cm);
			\fill[fill=blue!40] (5.5,3) -- (1.5,3) -- (1.5,-0.8660) -- (-1.2,-2.4249) -- (-1.2,-6.8) -- (4.5,-3.7528) --(5.5,-2.0207) -- (5.5,1.2);
			\draw[thick, draw=blue] (3.5,3) -- (3.5,-2.0207) -- (-1.2,-4.6188);
			\end{tikzpicture}
		};
	\end{tikzpicture}
	\phantomcaption{~}
	\label{fig:2dim_3distr_datadistr}
\end{subfigure}~\hspace{0.5cm}
\end{minipage}
\hfill
\begin{minipage}[t]{0.3\textwidth}
\strut\vspace*{-\baselineskip}\newline
\begin{subfigure}[t]{\textwidth}
	\begin{tikzpicture}
	\node[anchor=north east] at (0, 0) {\llap{\textbf{(c)}}};
	\node[anchor=north west] at (0, 0) {
		\begin{tikzpicture}[scale=0.23]
		\clip(-4.5,-7.1) rectangle (8,4.5);
		\fill[fill=blue!40, rotate=15] (-5,-5) rectangle (5,5);
		\fill[fill=white, rotate=15] (-1,-1) rectangle (1,1);
		\draw[thick, draw=blue, rotate=15] (-3,-3) rectangle (3,3);
		\fill[fill=red!40] (0,0) circle (1cm);
		\fill[fill=red!40] (7,0) circle (1cm);
		\fill[fill=red!40] (3.5, -6.0622) circle (1cm);
		\end{tikzpicture}
	};
\end{tikzpicture}
\phantomcaption{~}
\label{fig:2dim_3distr_datadistr_square}
\end{subfigure}
\end{minipage}
\caption{A schematic image of the higher-dimensional data distribution models. We take marginals with equal $\sigma$ and draw equally many points from each distribution. The red circles represent the means and the associated hyperballs. In blue, we see the low-order cut, in light blue the area of points that possibly contribute to the order of the cut. In \textbf{(a)} we have two marginal distributions. The low-order cut is $S=\{(x_1,\ldots,x_d)\mid x_1\leq \frac{\lambda}{2}\}$. The approximation of the hyperball  as a hypercube is shown in dark red. \textbf{(b)} and \textbf{(c)} show 3 distributions whose means are positioned on an equilateral triangle. The cut along the Voronoi cell is shown in \textbf{(b)}, in \textbf{(c)} we see the cut along a cube centered at one of the means.}
\label{fig:2dim_datadistr}
\end{figure}
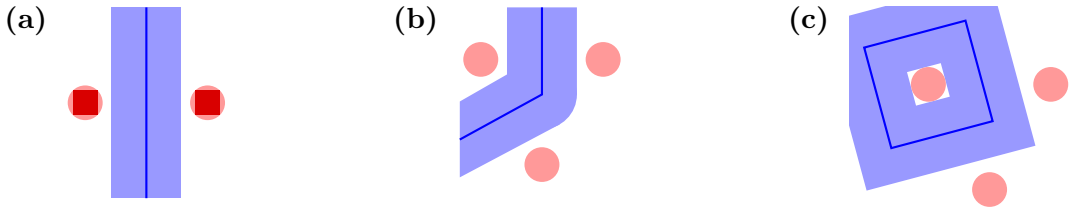

When we consider more than two Gaussians, the positions of the means become crucial. If the means are aligned (as is always the case in one dimension), they are naturally ordered and a cut separating consecutive means also separates all smaller from all larger means.
Also, all distributions around non-adjacent means only contribute insignificantly to the cut.
Hence, the analysis is essentially like the one for a mixture of just two marginal distributions.

However, if the means are not aligned, the best choice for the Borel set to determine the low-order cut is no longer obvious.
We demonstrate this with the example of three two-dimensional Gaussians whose means form an equilateral triangle. Here, there are several possible choices to define the cuts.
Any straight line separating one of the means from the others will be too close to one dense region, resulting in a high-order cut.
For this reason, we investigate two alternative cuts: one along the \emph{Voronoi cell}, that is $S_i=\{\vec{x}\mid \lVert \vec{x}-\vec{\mu_i}\rVert\leq\lVert\vec{x}-\vec{\mu_k}\rVert \text{ for all } k\neq i\}$, and one along a square centered at the mean, see \Cref{fig:2dim_3distr_datadistr} and \Cref{fig:2dim_3distr_datadistr_square}.
The cut along the Voronoi cell works well for \Cref{thm:large_n_delta} and yields the existence of incomparable tangles asymptotically almost surely as soon as $\lambda>4.1$.
Notably, the square approach yields much better results in the application of \Cref{thm:small_n_delta}, as shown in \Cref{fig:2dim_3distr_delta}.

\begin{figure}
	\begin{subfigure}[t]{0.49\textwidth}
		\textbf{(a)}
		\vtop{
			\vskip-1ex
			\hbox{
				\includegraphics[width=0.89\textwidth]{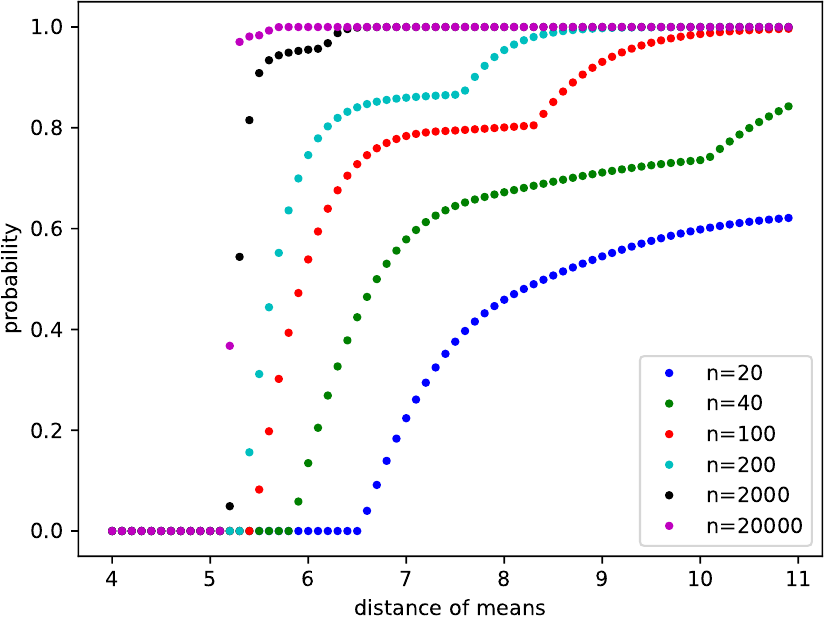}
			}
		}
		\captionsetup{width=6cm}\phantomcaption{~}
		\label{fig:2dim_2distr_delta}
	\end{subfigure}~
	\begin{subfigure}[t]{0.49\textwidth}
		\textbf{(b)}
		\vtop{
			\vskip-1ex
			\hbox{
				\includegraphics[width=0.89\textwidth]{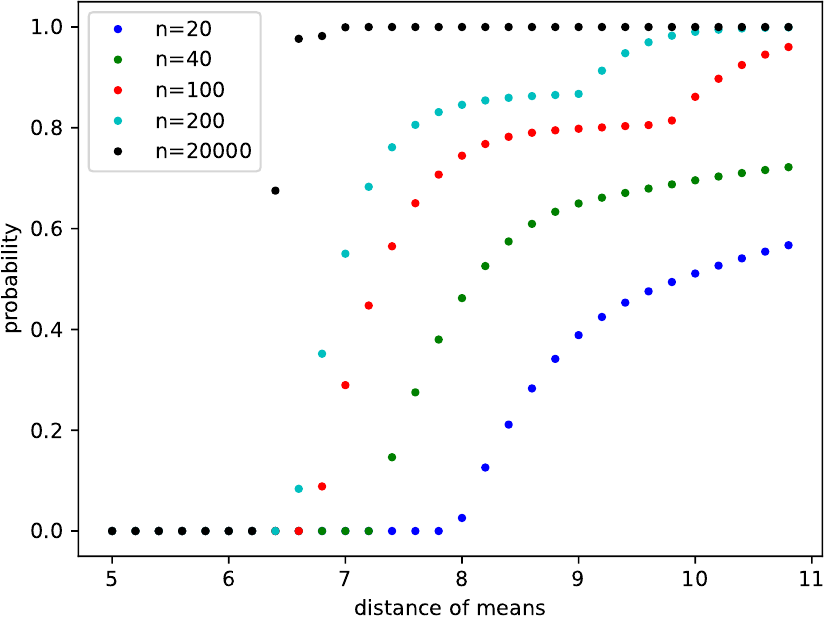}
			}
		}
		\captionsetup{width=6cm}\phantomcaption{~}
		\label{fig:3dim_2distr_delta}
	\end{subfigure}
	
	\begin{subfigure}[t]{0.49\textwidth}
		\textbf{(c)}
		\vtop{
			\vskip-1ex
			\hbox{
				\includegraphics[width=0.88\textwidth]{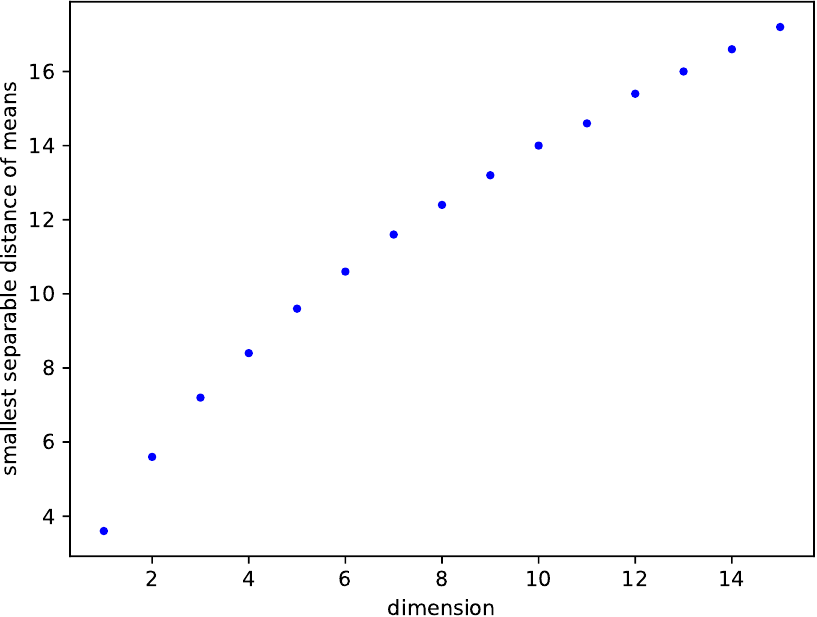}
			}
		}
		\captionsetup{width=6cm}\phantomcaption{~}
		\label{fig:hdim_2distr_delta}
	\end{subfigure}~
	\begin{subfigure}[t]{0.49\textwidth}
		\textbf{(d)}
		\vtop{
			\vskip-1ex
			\hbox{
				\includegraphics[width=0.88\textwidth]{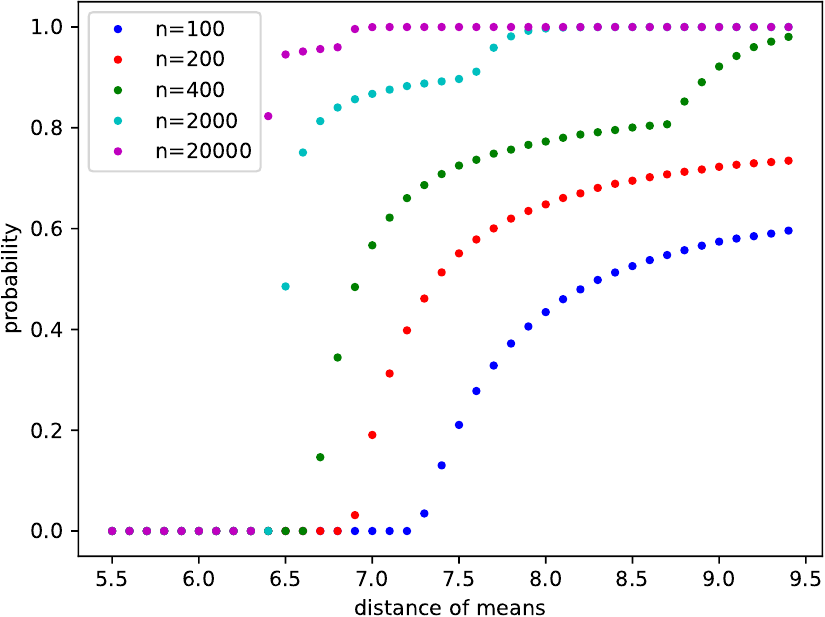}
			}
		}
		\captionsetup{width=6cm}\phantomcaption{~}
		\label{fig:2dim_3distr_delta}
	\end{subfigure}
	\caption{Our computations in higher dimensions.\newline
		First we consider a mixture of two types of distributions with mean distance $\lambda$ (see \Cref{fig:2dim_2distr_datadistr}).
		We plot the probability bounds from \Cref{thm:small_n_delta} dependent on $\lambda$, where the dimension is two in \textbf{(a)} and three in \textbf{(b)}.\newline
		In \textbf{(c)} we plot the smallest $\lambda$ such that the conditions of \Cref{thm:large_n_delta} are met and we get incomparable tangles a.a.s.\ dependent on the dimension, using the described approximation.\newline
		In \textbf{(d)} we take three distributions whose means form an equilateral triangle with side length $\lambda$ (see \Cref{fig:2dim_3distr_datadistr_square}).
		We plot the probability bound from \Cref{thm:small_n_delta} for dimension 2 against the side length $\lambda$.
		The size of the hypercube is chosen to maximize the resulting lower bound on the probability.\newline
		Again we see that at some point the bound corresponding to Hoeffding's Inequality (\Cref{cor:hoeffding}) becomes larger than the bounds relating to \Cref{thm:berry-esseen}.
	}
	\label{fig:multidim_results}
\end{figure}

\section{Results for the fully connected graph}
\label{sec:complete}
We also investigate to which extent the theorems derived for the \deltagraph\ in \Cref{sec:probability_delta} translate to the \completegraph. As the weight function, we take $\hat{w}_{\sigma}(\vec{x},\vec{y}) \coloneqq \exp\left(\frac{- \lVert \vec{x} - \vec{y} \rVert^2}{2\sigma^2}\right)$, where $\sigma$ is the standard deviation of each of the marginal distributions. Like before, we start by identifying incomparable tangles induced by hyperballs around the means. Using \Cref{lem:clique-tangle}, as soon as the hyperball contains at least two data points, there is a tangle for this region. The order of the tangle depends on the smallest weight among the contained edges. Let the diameter of the hyperballs be $\Delta$. We observe that the number of vertices within a hyperball grows with $\Delta$, whereas the minimum edge weight decreases. This yields an interesting trade-off for obtaining a good tangle order. Note that, in contrast to the \deltagraph, the choice of $\Delta$ does not influence the cut order.

Again, we first deduce a general bound on the probability that, for a given separation, the $\frac{\Delta}{2}$-ball around the mean of a distribution induces a tangle with a prescribed orientation of the separation.

\begin{theorem}	\label{thm:large_n_weight}
    Let $d, m \in \NN_{>0}$. Fix $\sigma > 0$ and let $(r_1, \vec{\mu}_1), \ldots, (r_m, \vec{\mu}_m)$ as in \Cref{def:dataset}. Choose $\Delta > 0$, $k \in [m]$ and define $B \coloneqq B_{\frac{\Delta}{2}}[\vec{\mu_k}]$. Let $S \subseteq \RR^d$ be a Borel set. If 
    \begin{equation*}
        \mean{\nu}(B \cap S) > \tfrac{1}{2} \mean{\nu} (B) 
    \text{ and } 
        \tfrac{2}{9} e^{-\frac{\Delta^2}{2\sigma^2}}\cdot \mean{\nu}(B)^2 > \smash{\int_{S}} \smash{\int_{\compl{S}}} \hat{w}_{c}(\vec{x},\vec{y}) \mean{f}(\vec{x}) \mean{f}(\vec{y}) \, d\vec{y} \, d\vec{x} \, ,
    \end{equation*}
    then for compatible $n$ and $\ell$ and $\mathbb{D} = \mathbb{D}[(r_k, \vec{\mu}_k, \sigma)_{k \in [m]},n,\ell]$, it holds that
    \[
    \smash{\Pr_{D \sim \mathbb{D}}}\left( \CT_{G(D, \hat{w}_{\sigma})}(V_B) \text{ is a tangle that contains } V_S
    \right) = 1 - O\left(\tfrac{1}{n}\right) \ .
    \]
\end{theorem}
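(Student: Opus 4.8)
\section*{Proof proposal}

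The plan is to mirror the proof of \Cref{thm:large_n_delta} almost verbatim, since the structure of the argument carries over to the \completegraph. The one genuinely new ingredient is handling the fact that $V_B$ no longer induces a \emph{unit-weight} clique. First I would observe that in $G \coloneqq G(D, \hat{w}_\sigma)$ \emph{every} vertex subset induces a clique, so $V_B$ does as well. The crucial point is to lower-bound its minimum edge weight: since $B = B_{\frac{\Delta}{2}}[\vec{\mu_k}]$ has radius $\frac{\Delta}{2}$, any two data points in $B$ are at Euclidean distance at most $\Delta$, so every edge inside $V_B$ has weight at least $e^{-\frac{\Delta^2}{2\sigma^2}}$; that is, $w_{V_B} \geq e^{-\frac{\Delta^2}{2\sigma^2}}$. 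By \Cref{lem:clique-tangle}, $\CT_G(V_B)$ is then a tangle of order $\frac{2}{9}\size{V_B}^2 \cdot w_{V_B}$ as soon as $\size{V_B} \geq 2$.

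Next I would reduce the event ``$\CT_G(V_B)$ is a tangle containing $V_S$'' to three inequalities, exactly as before: (i)~$\size{V_B} \geq 2$, (ii)~$\size{V_B \cap V_S} > \size{V_B \setminus V_S}$, and (iii)~$\kappa_G(V_S) < \frac{2}{9}\size{V_B}^2 \cdot w_{V_B}$. Here is where the deterministic lower bound on $w_{V_B}$ does the work: because $w_{V_B}$ is itself a random quantity depending on the sampled points, I would replace the threshold in~(iii) by the smaller, non-random $\frac{2}{9}\size{V_B}^2 \cdot e^{-\frac{\Delta^2}{2\sigma^2}}$, so that it suffices to establish $\kappa_G(V_S) < \frac{2}{9} e^{-\frac{\Delta^2}{2\sigma^2}} \size{V_B}^2$. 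By \Cref{lem:basic_computations_app}, Parts~\ref{enum:exp_size_sq_app} and~\ref{enum:exp_kappa_app}, the leading ($n^2$) coefficient of $\frac{2}{9} e^{-\frac{\Delta^2}{2\sigma^2}}\EX(\size{V_B}^2) - \EX(\kappa_G(V_S))$ equals $\frac{2}{9} e^{-\frac{\Delta^2}{2\sigma^2}}\mean{\nu}(B)^2 - \int_S \int_{\compl{S}} \hat{w}_\sigma(\vec{x},\vec{y}) \mean{f}(\vec{x}) \mean{f}(\vec{y}) \, d\vec{y}\, d\vec{x}$, which is strictly positive precisely by the second hypothesis of the theorem. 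The first hypothesis $\mean{\nu}(B \cap S) > \frac{1}{2}\mean{\nu}(B)$ likewise makes~(ii) hold in expectation, exactly as in \Cref{thm:large_n_delta}.

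With all three inequalities holding in expectation, I would introduce gaps $\varepsilon_1, \varepsilon_2 > 0$ (half the respective expectation-level slacks) and define four events bounding the deviations of $\size{V_{B \cap S}}$, $\size{V_{B \setminus S}}$, $\size{V_B}^2$, and $\kappa_G(V_S)$ from their means by $\varepsilon_1 n$, $\varepsilon_1 n$, $\varepsilon_2 n^2$, and $\varepsilon_2 n^2$ respectively. The Bienaymé--Chebyshev inequality (\Cref{thm:chebyshev_app}) applied to each, using the variance estimates of \Cref{lem:basic_computations_app} (Parts~\ref{enum:var_size_app}, \ref{enum:var_size_sq_app}, and \ref{enum:var_kappa_app}, the last of which is stated for $w \in \{w_\delta, \hat{w}_c\}$ and hence applies to $\hat{w}_\sigma$), gives each event probability $1 - O(\frac{1}{n})$; a union bound then yields $1 - O(\frac{1}{n})$ for their conjunction. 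The final bookkeeping step is to verify that, for $n$ large enough, the conjunction of the four events forces (i)--(iii) to hold simultaneously, which is the same telescoping computation as in \Cref{thm:large_n_delta}. The only step requiring care---and the main conceptual obstacle---is the passage from the random tangle-order $\frac{2}{9}\size{V_B}^2 w_{V_B}$ to the deterministic surrogate $\frac{2}{9}\size{V_B}^2 e^{-\frac{\Delta^2}{2\sigma^2}}$; everything else is the weighted analogue of the $\delta$-graph argument, with the factor $e^{-\frac{\Delta^2}{2\sigma^2}}$ tracking the loss incurred by the non-unit edge weights.
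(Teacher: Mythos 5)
Your proposal is correct and follows exactly the paper's own argument: the paper likewise reduces to the proof of \Cref{thm:large_n_delta}, with the single new ingredient being the deterministic bound $w_{V_B} \geq e^{-\frac{\Delta^2}{2\sigma^2}}$ (since any two points of $B$ are at distance at most $\Delta$), and then compares $\kappa_{G(D,\hat{w}_\sigma)}(V_S)$ against this lower bound rather than the random value of $w_{V_B}$. Your write-up simply spells out the resulting Chebyshev/union-bound bookkeeping in more detail than the paper does.
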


\begin{proof}
    We follow the proof of \Cref{thm:large_n_delta}, but in contrast to the \deltagraph, $w_{V_B}$ is a random variable for the \completegraph. We observe however, that $w_{V_B} \geq e^{\frac{-\Delta^2}{2\sigma^2}}$ as two points in $B$ have distance at most $\Delta$. By comparing $\kappa_{G(D, \hat{w}_{\sigma})}(V_S)$ with the lower bound on $w_{V_B}$ instead of the actual value, the rest of the proof is analogous to the one of \Cref{thm:large_n_delta}.
\end{proof}

As in \Cref{sec:delta}, we also want conditions to ensure a faster convergence of the probabilities. Unfortunately, \Cref{lem:bound_kappa} does not easily translate to the \completegraph\ in general. Restricting ourselves to the one-dimensional setting, we find the following suitable inequality.

\begin{lemma}\label{lem:bound_kappa_complete}
Suppose $\sigma > 0$, $n \in \NN_{>0}$, $D = (x_1, \ldots, x_n)\in\RR^{n}$ and $c \in \RR$. Then
\[\kappa_{G(D, \hat{w}_{\sigma})}\big(V_{(-\infty,c]}\big) \leq \frac{1}{4}\left(\sum_{i=1}^n e^{\frac{-(x_i-c)^2}{2\sigma^2}}\right)^2 \ .\]
\end{lemma}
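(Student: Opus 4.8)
The plan is to bound the cut $\kappa_{G(D, \hat{w}_{\sigma})}\big(V_{(-\infty,c]}\big)$ from above in a way that factors through the one-dimensional geometry of the cut point~$c$. By definition of the edge-connectivity function (\Cref{ex:edge-con}) applied to the \completegraph, this cut is the sum of edge weights $\hat{w}_{\sigma}(x_i, x_j) = \exp\!\big(\frac{-(x_i - x_j)^2}{2\sigma^2}\big)$ over all pairs $\{i,j\}$ with one endpoint in $V_{(-\infty,c]}$ (i.e.\ $x_i \leq c$) and the other in its complement (i.e.\ $x_j > c$). So I first write
\[
\kappa_{G(D, \hat{w}_{\sigma})}\big(V_{(-\infty,c]}\big) = \sum_{i \colon x_i \leq c} \ \sum_{j \colon x_j > c} e^{\frac{-(x_i - x_j)^2}{2\sigma^2}}.
\]

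The key observation is that for a crossing pair with $x_i \leq c < x_j$, the quantity $(x_i - x_j)^2$ is at least $(x_i - c)^2 + (c - x_j)^2$, since $x_i - x_j = (x_i - c) + (c - x_j)$ is a sum of two reals of the same sign (so the cross term $2(x_i-c)(c-x_j)$ is nonnegative). This gives the factorization
\[
e^{\frac{-(x_i - x_j)^2}{2\sigma^2}} \leq e^{\frac{-(x_i - c)^2}{2\sigma^2}} \cdot e^{\frac{-(x_j - c)^2}{2\sigma^2}},
\]
which decouples the weight into a contribution depending only on $x_i$ and one depending only on $x_j$. I would set $a_i \coloneqq e^{\frac{-(x_i - c)^2}{2\sigma^2}}$ so that the crossing weight is at most $a_i a_j$, and then the double sum over crossing pairs is bounded by $\big(\sum_{i \colon x_i \leq c} a_i\big)\cdot\big(\sum_{j \colon x_j > c} a_j\big)$, a product of two nonnegative reals.

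To finish, I apply the elementary inequality $a \cdot b \leq \frac{1}{4}(a+b)^2$ (the same AM--GM-type bound used in the proof of \Cref{lem:bound_kappa}) to these two partial sums. Since the two index sets partition $[n]$, their combined sum is exactly $\sum_{i=1}^n a_i = \sum_{i=1}^n e^{\frac{-(x_i - c)^2}{2\sigma^2}}$, yielding
\[
\kappa_{G(D, \hat{w}_{\sigma})}\big(V_{(-\infty,c]}\big) \leq \Big(\sum_{i \colon x_i \leq c} a_i\Big)\Big(\sum_{j \colon x_j > c} a_j\Big) \leq \frac{1}{4}\Big(\sum_{i=1}^n e^{\frac{-(x_i-c)^2}{2\sigma^2}}\Big)^2,
\]
as claimed. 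The only genuinely substantive step is the cross-term argument establishing $(x_i - x_j)^2 \geq (x_i - c)^2 + (c - x_j)^2$ for crossing pairs; I expect this to be the main (if modest) obstacle, since it is exactly the place where the one-dimensionality and the position of the cut point $c$ between the two groups are exploited. Everything else is the same packaging of an AM--GM bound as in \Cref{lem:bound_kappa}, and the restriction to dimension one is what makes the clean separation into $(x_i - c)$ and $(x_j - c)$ terms possible.
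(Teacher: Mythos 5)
Your proposal is correct and follows essentially the same route as the paper's own proof: the cross-term argument $(x_i - x_j)^2 \geq (x_i - c)^2 + (c - x_j)^2$ for crossing pairs, the resulting factorization of the edge weight, and the final application of $a \cdot b \leq \frac{1}{4}(a+b)^2$ to the two partial sums are exactly the steps the paper takes. No gaps; the argument is complete as written.
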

\begin{proof}
    Let $\{i,j\}$ be an edge that contributes to the cut, say $i \in V_{(-\infty,c]}$ and $j \in V_{(c,\infty)}$. Then $x_i \leq c < x_j$ and hence $x_j - c$ and $c - x_i$ are non-negative. This yields $(x_j - x_i)^2 = (x_j - c + c - x_i)^2 \geq (x_j - c)^2 + (c - x_i)^2$ and hence 
    \[\hat{w}_{\sigma}(x_i, x_j) = e^{\frac{-(x_i - x_j)^2}{2\sigma^2}} \leq e^{\frac{-(x_i - c)^2 - (c - x_j)^2}{2\sigma^2}} = e^{\frac{-(x_i - c)^2}{2\sigma^2}} \cdot e^{\frac{-(x_j - c)^2}{2\sigma^2}}.\]
    Using this insight and the inequality $a \cdot b \leq \frac{1}{4}(a+b)^2$ for reals $a$ and $b$, we conclude
    \begin{align*}
    \kappa_{G(D, \hat{w}_{\sigma})}\big(V_{(-\infty,c]}\big) &= \sum_{i \colon x_i \leq c} \ \sum_{j \colon x_j > c}  e^{\frac{-(x_i - x_j)^2}{2\sigma^2}} \leq \sum_{i \colon x_i \leq c} \ \sum_{j \colon x_j > c}  e^{\frac{-(x_i - c)^2}{2\sigma^2}} \cdot e^{\frac{-(x_j - c)^2}{2\sigma^2}} \\
    &= \left(\sum_{i \colon x_i \leq c} e^{\frac{-(x_i - c)^2}{2\sigma^2}}\right) \cdot \left(\sum_{j \colon x_j > c} e^{\frac{-(x_j - c)^2}{2\sigma^2}}\right) \\
    &\leq \frac{1}{4} \left(\sum_{i \colon x_i \leq c} e^{\frac{-(x_i - c)^2}{2\sigma^2}} + \sum_{j \colon x_j > c} e^{\frac{-(x_j - c)^2}{2\sigma^2}}\right)^2\\
    &= \frac{1}{4} \left(\sum_{i=1}^n e^{\frac{-(x_i - c)^2}{2\sigma^2}} \right)^2
    \end{align*}
    as desired.
\end{proof}

We are now able to express the question whether a tangle orients a cut using a sum of independent random variables. This enables us to apply Hoeffding's inequality \Cref{cor:hoeffding} to get the desired stronger convergence.

\begin{theorem}\label{thm:small_n_weight}
	Let $m \in \NN_{>0}$. Fix $\sigma > 0$ and let $\mu_1,\ldots,\mu_k \in \RR$ and $r_1, \ldots, r_k \in \RR_{>0}$ with $r_1 + \ldots+ r_k = 1$. Choose $\Delta >0$, $k \in [m]$ and define $I \coloneqq [\mu_k - \frac{\Delta}{2},\mu_k + \frac{\Delta}{2}]$. Let $S$ be either $(-\infty,c]$ or $(c, \infty)$ for some $c \in \RR$ such that $I \subseteq S$.
	If 
	\[
	   \frac{\sqrt{2}}{3} \cdot e^{\frac{-\Delta^2}{4\sigma^2}} \cdot \mean{\nu}(I) > \sum_{k=1}^m \frac{r_k}{2\sqrt{2}} \cdot e^{\frac{-(\mu_k - c)^2}{4\sigma^2}} \ ,
	\]
	then, for all compatible $n$ and $\ell$ and $\mathbb{D} = \mathbb{D}[(r_k, \mu_k, \sigma)_{k \in [m]},n,\ell]$,
    \begin{align*}
    &\Pr_{D \sim \mathbb{D}}\left( \CT_{G(D, \hat{w}_{\sigma})}(V_I) \text{ is a tangle that contains } V_S
    \right) \\ 
    &{}\geq{} \  1 - \exp\left(
    -n \cdot 
    \frac{
    2 \cdot \left(\frac{\sqrt{2}}{3} \cdot e^{\frac{-\Delta^2}{4\sigma^2}} \cdot \mean{\nu}(I) - \sum_{k=1}^m \frac{r_k}{2\sqrt{2}} \cdot e^{\frac{-(\mu_k - c)^2}{4\sigma^2}}\right)^2
    }
    {
    \left(\frac{1}{2} + \frac{\sqrt{2}}{3} \cdot e^{\frac{-\Delta^2}{4\sigma^2}}  \right)^2
    } \right)\\
    &\phantom{{}\geq{} \ }- \left(1 +  \sum_{k=1}^m  \frac{n_k \cdot \nu_k(I)}{1 - \nu_k(I)}\right) \cdot \prod_{k=1}^m \big(1 - \nu_k(I)\big)^{n_k} \ .
    \end{align*}
    In particular,
    \[\Pr_{D \sim \mathbb{D}}\left( \CT_{G(D, \hat{w}_{\sigma})}(V_I) \text{ is a tangle that contains } V_S
    \right) = 1 - e^{-\Omega(n)} \ .\]
\end{theorem}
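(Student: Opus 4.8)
The plan is to mirror the structure of the proof of \Cref{thm:small_n_delta}, replacing the use of \Cref{lem:bound_kappa} by its one-dimensional analogue \Cref{lem:bound_kappa_complete}. As in the \deltagraph\ case, the event that $\CT_{G(D, \hat{w}_{\sigma})}(V_I)$ is a tangle containing $V_S$ is implied by the conjunction of two conditions: that $\size{V_I} \geq 2$ (so that \Cref{lem:clique-tangle} applies, using that two points in the interval $I$ of width $\Delta$ have distance at most $\Delta$ and hence $w_{V_I} \geq e^{-\Delta^2/(2\sigma^2)}$), and that $\frac{2}{9} e^{-\Delta^2/(2\sigma^2)} \size{V_I}^2 > \kappa_{G(D, \hat{w}_{\sigma})}(V_S)$ (which, since $I \subseteq S$, forces $V_S$ into the tangle). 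So I would first reduce the target probability to a lower bound via the union bound on these two events.

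The core step is to turn the cut-order inequality into a statement about a sum of independent random variables with bounded range, so that \Cref{cor:hoeffding} applies. Using \Cref{lem:bound_kappa_complete} with $S = (-\infty, c]$ (the case $S = (c,\infty)$ being symmetric), I have $\kappa_{G(D, \hat{w}_{\sigma})}(V_S) \leq \frac{1}{4}\big(\sum_{i=1}^n e^{-(x_i-c)^2/(2\sigma^2)}\big)^2$. Combining this with the clique lower bound, the desired inequality $\frac{2}{9} e^{-\Delta^2/(2\sigma^2)} \size{V_I}^2 > \frac{1}{4}\big(\sum_i e^{-(x_i-c)^2/(2\sigma^2)}\big)^2$ is, after taking square roots (both sides nonnegative), equivalent to
\[
\frac{\sqrt{2}}{3} e^{-\Delta^2/(4\sigma^2)} \size{V_I} > \frac{1}{2}\sum_{i=1}^n e^{-(x_i-c)^2/(2\sigma^2)} \ .
\]
Writing $\size{V_I} = \sum_i \mathds{1}_I(X_i)$, this rearranges to $\sum_{i=1}^n Z_i < 0$ for the independent random variables $Z_i \coloneqq \frac{1}{2} e^{-(X_i-c)^2/(2\sigma^2)} - \frac{\sqrt{2}}{3} e^{-\Delta^2/(4\sigma^2)} \mathds{1}_I(X_i)$, each of which is bounded (the first summand lies in $[0,\tfrac12]$ and the second in $[0, \tfrac{\sqrt2}{3} e^{-\Delta^2/(4\sigma^2)}]$, so $b_i - a_i \leq \frac12 + \frac{\sqrt2}{3} e^{-\Delta^2/(4\sigma^2)}$).

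The next step is to compute $\EX(\mean{Z})$ and verify it is negative. For each $i$ with $\ell(i)=j$, the expectation $\EX(e^{-(X_i-c)^2/(2\sigma^2)})$ is a Gaussian integral that evaluates in closed form: convolving the density $\phi(\cdot\mid\mu_j,\sigma^2)$ with the factor $e^{-(x-c)^2/(2\sigma^2)}$ yields $\frac{1}{\sqrt{2}} e^{-(\mu_j-c)^2/(4\sigma^2)}$. Averaging over $i$ gives $\frac1n\sum_i \EX(Z_i) = \frac{1}{2\sqrt2}\sum_k r_k e^{-(\mu_k-c)^2/(4\sigma^2)} - \frac{\sqrt2}{3} e^{-\Delta^2/(4\sigma^2)} \mean{\nu}(I)$, which is exactly $-1$ times the quantity whose positivity is the theorem's hypothesis; hence $\EX(\mean{Z}) < 0$. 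Plugging $\EX(\mean{Z})$ and the range bound into \Cref{cor:hoeffding} produces the stated Hoeffding term. Finally, bounding $\Pr(\size{V_I}\geq 2)$ by \Cref{lem:basic_computations_app}, Part \ref{enum:size_geq_two_app} and combining via the union bound gives the explicit lower bound; the asymptotic statement $1 - e^{-\Omega(n)}$ then follows exactly as in the closing paragraph of the proof of \Cref{thm:small_n_delta}, since the Hoeffding term decays exponentially and the $\size{V_I}\geq 2$ term is $(1+c_2 n)c_3^n = e^{-\Omega(n)}$.

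The main obstacle I anticipate is the closed-form evaluation of $\EX(e^{-(X_i-c)^2/(2\sigma^2)})$ and confirming that the resulting constant $\frac{1}{\sqrt2} e^{-(\mu_j-c)^2/(4\sigma^2)}$ matches the factors in the theorem statement; the appearance of $e^{-\Delta^2/(4\sigma^2)}$ (half the exponent of the clique weight bound) comes from taking the square root, and it is worth double-checking that the range bound for Hoeffding and the square-rooted inequality are handled consistently. The rest is a faithful transcription of the \deltagraph\ argument.
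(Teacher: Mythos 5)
Your proposal is correct and follows essentially the same route as the paper's own proof: union bound over the events $\size{V_I}\geq 2$ and $\tfrac{2}{9}e^{-\Delta^2/(2\sigma^2)}\size{V_I}^2 > \kappa(V_S)$, reduction via \Cref{lem:bound_kappa_complete} and a square root to the event $\sum_i Y_i < 0$ for exactly the same bounded independent variables, the same Gaussian integral $\EX(e^{-(X_i-c)^2/(2\sigma^2)}) = \tfrac{1}{\sqrt 2}e^{-(\mu_{\ell(i)}-c)^2/(4\sigma^2)}$, and then Hoeffding plus \Cref{lem:basic_computations_app}, Part \ref{enum:size_geq_two_app}. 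Your explicit range bound $b_i - a_i \leq \tfrac12 + \tfrac{\sqrt2}{3}e^{-\Delta^2/(4\sigma^2)}$ is in fact stated more carefully than in the paper, which contains a sign slip at that point.
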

\begin{proof} For better readability, let $G \coloneqq G(D, \hat{w}_{\sigma})$.
As noticed before, by \Cref{lem:clique-tangle}, $\CT_G(V_I)$ is a tangle as soon as $\size{V_I} \geq 2$. Therefore, we can first use a union bound to obtain
\begin{multline*}
\Pr_{D \sim \mathbb{D}}\left( \CT_{G(D, \hat{w}_{\sigma})}(V_I) \text{ is a tangle that contains } V_S\right)\\
\geq \Pr_{D \sim \mathbb{D}}\big(V_S \in \CT_G(V_I)\big) + \Pr_{D \sim \mathbb{D}}\big(\size{V_i} \geq 2\big) - 1.
\end{multline*}
Since $V_I \subseteq V_S$ by assumption, $V_S$ is a member of $\CT_G(V_I)$ when $\frac{2}{9} w_{V_I} \size{V_I}^2 > \kappa_G(V_S)$. We observe $w_{V_I} \geq e^{\frac{-\Delta^2}{2\sigma^2}}$ as two points in $I$ have distance at most $\Delta$. Setting
\[Y_i \coloneqq \frac{1}{2}e^{\frac{-(X_i - c)^2}{2\sigma^2}} - \frac{\sqrt{2}}{3}e^{\frac{-\Delta^2}{4\sigma^2}}\cdot\mathds{1}_{I}(X_i) \ ,\]
we can use \Cref{lem:bound_kappa_complete} to obtain
\begin{align*}
    \Pr(V_S \in \CT_G(V_I))
    &=\Pr\left(\frac{2}{9} w_{V_I} \size{V_I}^2 > \kappa_G(V_S)\right) \\
    &\geq \Pr\left(\frac{2}{9} e^{\frac{-\Delta^2}{2\sigma^2}}\cdot\size{V_I}^2 > \frac{1}{4}\left(\sum_{i=1}^n e^{\frac{-(X_i - c)^2}{2\sigma^2}}\right)^2\right)\\
    &=\Pr\left( \frac{\sqrt{2}}{3} e^{\frac{-\Delta^2}{4\sigma^2}}\cdot\size{V_I} > \frac{1}{2}\sum_{i=1}^n e^{\frac{-(X_i - c)^2}{2\sigma^2}}\right)\\
    &=\Pr\left(\sum_{i=1}^n \frac{1}{2}\cdot e^{\frac{-(X_i - c)^2}{2\sigma^2}} - \frac{\sqrt{2}}{3} e^{\frac{-\Delta^2}{4\sigma^2}}\cdot \mathds{1}_I(X_i) < 0\right)\\
    &= \Pr\left( \sum_{i=1}^n Y_i < 0 \right) \ .
\end{align*}
Since the $Y_i$ are independent and for all $i \in [n]$, it holds that $\frac{\sqrt{2}}{3} e^{\frac{-\Delta^2}{4\sigma^2}} \leq Y_i \leq \frac{1}{2}$, we can apply \Cref{cor:hoeffding}. To obtain the expected value, we then compute
\[
\EX\left(e^{\frac{-(X_i - c)^2}{2\sigma^2}}\right) = \int_{\RR} e^{\frac{-(x - c)^2}{2\sigma^2}} \cdot e^{\frac{-(x - \mu_{\ell(i)})^2}{2\sigma^2}} \, dx =  \frac{1}{\sqrt{2}}\cdot e^{\frac{-(c - \mu_{\ell(i)})^2}{4\sigma^2}} \ .
\]
This yields
\[\EX(Y_i) = \frac{1}{2\sqrt{2}}\cdot e^{\frac{-(c - \mu_{\ell(i)})^2}{4\sigma^2}} - \frac{\sqrt{2}}{3} e^{\frac{-\Delta^2}{4\sigma^2}}\cdot \nu_{\ell(i)}(X_i) \ .\]
Now \Cref{cor:hoeffding} together with \Cref{lem:basic_computations_app}, Part \ref{enum:size_geq_two_app}, gives the desired bound.
\end{proof}

Figure \ref{fig:7} shows our computational results for the \completegraph\ in the \basecase\ of two one-dimensional Gaussian distributions with means $0$ and $\lambda$, equal standard deviation $\sigma$, and equally many points drawn from each marginal.
First we compute which distances of means $\lambda$ are separable with \Cref{thm:large_n_weight}, depending on the interval width $\Delta$, and find that we must have $\lambda>4.27$.

The preconditions of Theorems \ref{thm:large_n_delta} and \ref{thm:large_n_weight} represent the condition that, in expectation, the order of tangle must be larger than the edge connectivity of $V_S$.  Since every subset of data points induces a clique in the fully connected graph, we bound the size of the interval around the mean that we consider as a tangle-inducing clique in Lemma \ref{lem:clique-tangle}. Unlike in the case of the $\delta$-neighborhood graph, this parameter does not influence the value of the cut, but only influences the order of the tangle induced by the clique. As it turns out, picking a too narrow interval yields too few points in the clique and a too wide interval yields a too weak lower bound on the edge weight of each edge: in both cases, we only obtain a tangle of low order via Lemma \ref{lem:clique-tangle}. Figure \ref{fig:1dim_2distr_expectation_weight} shows the tradeoff between the interval width and the smallest separable distance of means.

Then we compute the lower bounds on the probability due to \Cref{thm:small_n_weight}.
We see that the distances are higher and the probabilities lower as in the same setting for the \deltagraph. 

\begin{figure}
	\begin{subfigure}[t]{0.49\textwidth}
	(a)
		\vtop{
            \vskip-1ex
            \hbox{
                \includegraphics[width=0.89\textwidth]{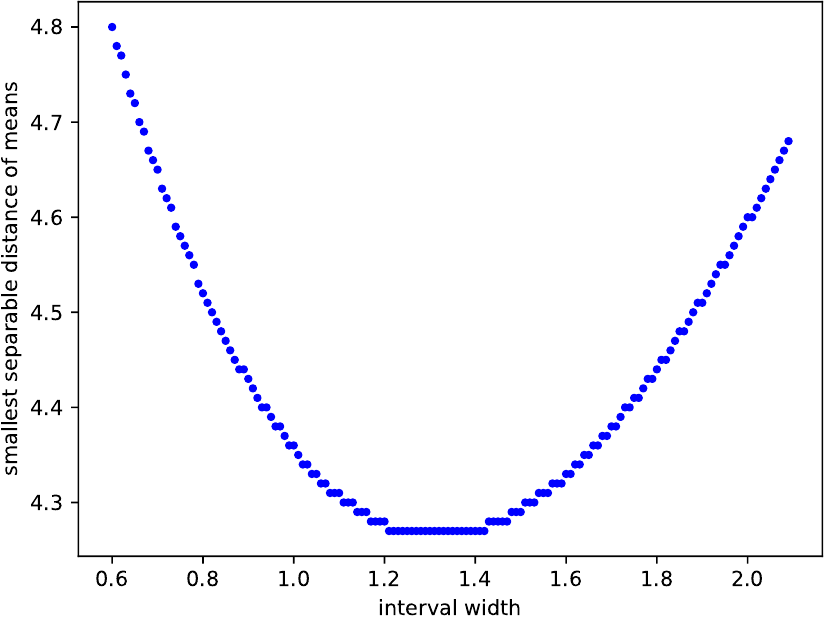}
                }
        }
		\phantomcaption{~}
	\label{fig:1dim_2distr_expectation_weight}
    \end{subfigure}~
	\begin{subfigure}[t]{0.49\textwidth}
	(a)
		\vtop{
            \vskip-1ex
            \hbox{
                \includegraphics[width=0.89\textwidth]{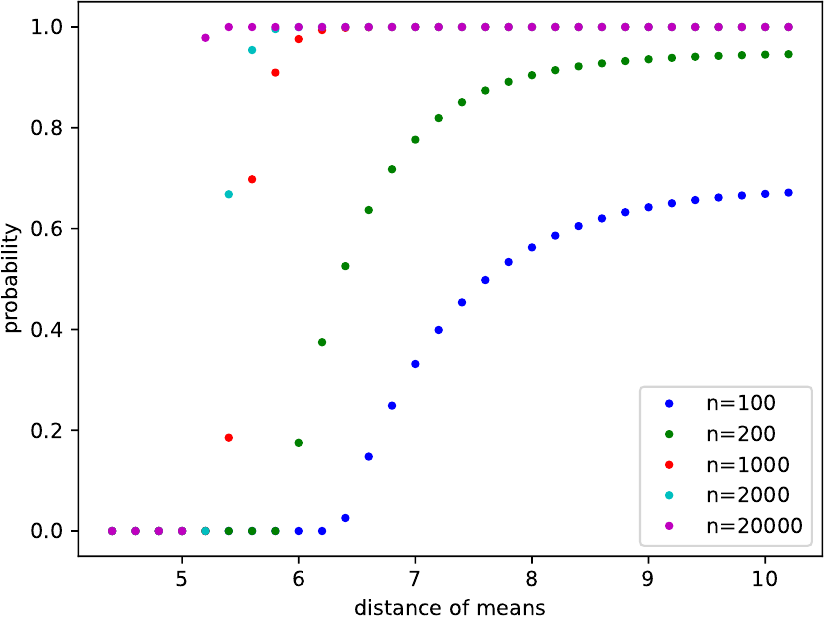}
                }
        }
		\phantomcaption{~}
	\label{fig:small_n_1dim_complete}
	\end{subfigure}
	\caption{Computational results for the \completegraph. The smallest distance of means $\lambda$ such that \Cref{thm:large_n_weight} can be applied, dependent on the width $\Delta$ of the interval used to define the tangles is shown in \textbf{(a)}. Plot \textbf{(b)} shows the results of applying \Cref{thm:small_n_weight} for different sizes $n$ of the data set. The interval width $\Delta$ is chosen to maximize the probability bound.}\label{fig:7}
\end{figure}

\section{Conclusion}\label{sec:conclusion}

In \cite{fluck19}, a precise connection between tangles, a subject from structural graph theory, and clustering, a central technique in data science, was established.
Our work aims at setting the foundation for an in-depth quantitative analysis of tangles in data sets as a means to capture soft cluster assignments.
To this end, we have applied tangle theory to sets of data points that are drawn from multiple Gaussian distributions.
For the two standard graph models in this context, we have found explicit conditions under which, asymptotically almost surely, incomparable tangles that can be associated with the participating Gaussian distributions exist.
Here, we contribute two things.
First we give a probability bound with exponential convergence to $1$, which has strong conditions on the distributions but provides a lower bound on the tangle order.
Secondly we provide a probability bound with slower convergence to $1$, which has weaker assumptions on the data distributions and becomes meaningful for large data sets.

Future projects shall continue the investigation of the potential of tangles in data sets.
For example, it would be interesting to study the \completegraph\ in more detail and possibly with other tools.
More refined bounds on the tangle order would lead to stronger probability bounds.
As another natural follow-up study, we leave as an open problem to improve on our probability bounds (or the conditions on the data distributions) in higher dimensions.
To exploit the tangle potential in the Gaussian setting in more depth, we suggest the theoretical analysis of multiple Gaussian distributions whose means are in arbitrary position or whose standard deviations differ.
In a next step, the results could be applied to actually recover (with some bounded error probability) the hidden labels. 
One can also go beyond Gaussian mixtures and study other ``well-clustered'' data. 

It can also be worthwhile to approach the problem via a study of branch decompositions.
The duality between tangle orders and orders of branch decompositions (\cite{minorsX}, also \cite[Theorem 6.1]{grohe2016}) will yield upper bounds on the probability that tangles of high order exist, which may help in the search for incomparable tangles.

Finally, regarding the benefit of the formalized connection between clustering and tangles, there is not only a potential of tangles to yield theoretical insights such as the presented ones about the existence and quality of clusters in data sets.
Indeed, exploring the connection further, the field of structural graph theory might benefit from the broad research on fast approximation algorithms for clustering, possibly yielding efficient approximations for computing graph tangles or decompositions.
However, note that there may be tangles not stemming from clusters, so clustering algorithms will not necessarily detect \emph{all} tangles in the data.

\bibliographystyle{abbrv}
\bibliography{references}

\end{document}